\def\G{\mathcal G}
\def\ls{\lesssim}
\def\t{\widetilde}
\def\Ind#1#2{#1\setbox0=\hbox{$#1x$}\kern\wd0\hbox to 0pt{\hss$#1\mid$\hss}
\lower.9\ht0\hbox to 0pt{\hss$#1\smile$\hss}\kern\wd0}
\def\Notind#1#2{#1\setbox0=\hbox{$#1x$}\kern\wd0\hbox to 0pt{\mathchardef
\nn="3236\hss$#1\nn$\kern1.4\wd0\hss}\hbox to 0pt{\hss$#1\mid$\hss}\lower.9\ht0
\hbox to 0pt{\hss$#1\smile$\hss}\kern\wd0}
\theoremstyle{plain}
\newtheorem{theorem}{Theorem}[section]
\newtheorem{prop}[theorem]{Proposition}
\newtheorem{fact}[theorem]{Fact}
\newtheorem{lemma}[theorem]{Lemma}
\newtheorem{cor}[theorem]{Corollary}
\newtheorem*{claim}{Claim}
\theoremstyle{defn}
\newtheorem{defn}[theorem]{Definition}
\newtheorem{remark}[theorem]{Remark}
\newtheorem*{observation}{Observation}
\def\Ind#1#2{#1\setbox0=\hbox{$#1x$}\kern\wd0\hbox to 0pt{\hss$#1\mid$\hss}
\lower.9\ht0\hbox to 0pt{\hss$#1\smile$\hss}\kern\wd0}
\def\Notind#1#2{#1\setbox0=\hbox{$#1x$}\kern\wd0\hbox to 0pt{\mathchardef
\nn="3236\hss$#1\nn$\kern1.4\wd0\hss}\hbox to 0pt{\hss$#1\mid$\hss}\lower.9\ht0
\hbox to 0pt{\hss$#1\smile$\hss}\kern\wd0}
\title{Groups in NTP$_2$}
\author{Nadja Hempel}
\address{ Nadja Hempel \\ Institut Camille Jordan\\
Université Claude Bernard Lyon 1\\43 boulevard du 11 novembre 1918\\
69622 Villeurbanne cedex\\
France }
\email{ hempel@math.univ-lyon1.fr}
\author{Alf Onshuus}
\address{ Alf Onshuus\\ Departamento de Matem\'aticas \\Universidad de los Andes \\
   Cra 1 No. 18A-10, Edificio H\\
   Bogot\'a, 111711\\
   Colombia}
\email{aonshuus@uniandes.edu.co}
\begin{document}
\maketitle
\begin{abstract}We prove the existence of abelian, solvable and nilpotent definable envelopes for groups definable in models of an NTP$_2$ theory. \end{abstract}

\section{Introduction}
One of the main concerns of model theory is the study of definable sets. For example,
given an abelian subgroup in some definable group, whether or not one can find a \emph{definable}
abelian group containing the given subgroup becomes very important, since it ``brings'' objects
outside the scope of model theory into the category of definable sets.

In that sense, an ongoing line of research consists of finding ``definable envelopes''. Specifically, one can ask if for a
definable group $G$ and a given subgroup of $G$ with a particular algebraic property  such as being abelian, solvable, or nilpotent, can one find a \emph{definable} subgroup of $G$ which contains the given subgroup and which has the same algebraic property.
This is always possible in stable theories (see \cite{Po}), and recent research has shown remarkable progress
both simple theories, and dependent theories:

In a dependent theory, Shelah has shown that given any definable group $G$ and any abelian subgroup of $G$, one can find a definable abelian subgroup in some extension of $G$ which contains the given abelian subgroup (see \cite{Sh783}). De Aldama generalized this result in  \cite{dAl} to nilpotent and normal solvable subgroups.

In simple theories, one cannot expect such a result to hold, as there are examples of definable groups  with simple theories which contain infinite abelian subgroups but for which all its definable abelian subgroups are finite (see Remark \ref{extraspecial}).  Nevertheless, one obtains definable envelopes ``up to finite index''. In \cite{Mi} and \cite{Mi2} Milliet proved that given any (abelian/nilpotent/solvable) subgroup $H$ of a group $G$ definable in a simple theory one can find a subgroup of $G$ which contains $H$  \emph{up to finite index} and which is (abelian/nilpotent/solvable).\footnote{The existence of nilpotent envelopes played an essential role in the proof of Palacin and Wagner showing that the ``fitting subgroup'', i.\ e.\ the group generated by all normal nilpotent subgroup, of a group definable in a simple theory is again nilpotent (see \cite{PaWa}).}

In this paper, we analyze arbitrary abelian, nilpotent and normal solvable subgroups of groups definable in theories without the tree property of the second kind (NTP$_2$ theories), which include both simple and dependent theories. We prove the existence of definable envelopes up to finite index in a saturated enough extension of a given group which is definable in a model of an NTP$_2$ theory, which is inspired by the result in simple theories as well as the one in dependent theories.

\section{Preliminaries}
In this section we state the known results in simple and dependent theories. Throughout the paper, we say that
a group is definable in a theory if the group is definable in some model of the theory. We also sometimes say that
a group is \emph{dependent}, \emph{simple} or \emph{NTP$_2$} if the theory of the group, in the language of groups, is, respectively, dependent, simple, or NTP$_2$. For some cardinal $\kappa$, a $\kappa$-saturated extension of a definable group is this group ``seen'' in an $\kappa$-saturated extension of the model in which the group is defined.

\begin{defn}
Let $M$ be a model of a theory $T$ in a language $\mathcal L$. Let $A$ be a subset of $M$.
A sequence $\langle a_i \rangle_{i\in I}$ is defined to be indiscernible over $A$, if $I$ is an ordered index set and given any formula $\phi(x_1, \dots , x_n)$
with parameters in $A$, and any two subsets $i_1< i_2< \dots i_n$ and $j_1< j_2< \dots j_n$ of $I$, we have
\[
M\models \ \phi(a_{i_1}, \dots, a_{i_n})\Leftrightarrow \phi(a_{j_1}, \dots, a_{j_n}).
\]
\end{defn}

The following is a well known fact which is proved using Erd\"os-Rado Theorem.

\begin{fact}
For some cardinal $\kappa$ and any set $A$, any sequence of elements $\langle a_i\rangle_{i\in \kappa}$ contains a subsequence of size $\omega$ which is indiscernible over $A$.

Even more, for any cardinal $\lambda$ and any set $A$, there is some cardinal $\kappa$ such that any sequence of elements $\langle a_i\rangle_{i\in \kappa}$ contains a subsequence of size $\lambda$ which is indiscernible over $A$.
\end{fact}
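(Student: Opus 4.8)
The plan is to deduce this from the Erd\H{o}s--Rado partition theorem; it suffices to prove the second (stronger) assertion, the first being the case $\lambda=\omega$. Fix the ambient language $\mathcal L$ and write $\mu=|\mathcal L|+|A|+\aleph_0$, so that for every $n<\omega$ there are at most $2^\mu$ complete types over $A$ in a fixed $n$-tuple of variables of the appropriate sort. Recall that Erd\H{o}s--Rado provides, for every infinite cardinal $\sigma$ and every $n<\omega$, the relation $\beth_n(\sigma)^+\to(\sigma^+)^{n+1}_\sigma$: any colouring of the $(n+1)$-element subsets of a set of size $\beth_n(\sigma)^+$ by $\le\sigma$ colours has a homogeneous subset of size $\sigma^+$.

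I would first reduce to a purely combinatorial statement about the index set. For each $n\ge 1$ let $c_n$ be the colouring of the $n$-element subsets of $\kappa$ sending $\{j_1<\dots<j_n\}$ to the type $\tp(a_{j_1}\dots a_{j_n}/A)$; each $c_n$ uses at most $2^\mu$ colours. If $I\subseteq\kappa$ is such that every $c_n$ is constant on the $n$-element subsets of $I$ (for all $n\le|I|$), then listing $\langle a_i\rangle_{i\in I}$ in increasing order of index yields, essentially by definition, a sequence indiscernible over $A$. Thus it is enough to find, for a suitably large $\kappa$, a subset $I\subseteq\kappa$ of cardinality $\lambda$ that is simultaneously homogeneous for all the $c_n$.

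Such an $I$ is produced by iterating Erd\H{o}s--Rado through the arities. Taking $\kappa$ to be a sufficiently tall $\beth$-number --- for instance $\kappa=\beth_{(2^{\mu+|\lambda|})^+}$ --- one builds, by recursion along the arities, a $\subseteq$-decreasing chain of subsets of $\kappa$, applying at the $n$-th step the partition relation so as to make the $n$-th subset homogeneous for $c_n$; along the way one also selects the eventual elements of $I$ one at a time and arranges, by the standard ``end-homogeneous set'' bookkeeping, that the homogeneity constants attached to the chosen points coincide, so that $I$ ends up homogeneous for every $c_n$ at once. The one place that requires genuine care --- and which I expect to be the main obstacle --- is the cardinal arithmetic underlying this construction: one must choose $\kappa$ high enough in the $\beth$-hierarchy, with index of cofinality exceeding $|\lambda|$, that the repeated passages to homogeneous subsets (including those forced at limit stages below $\lambda$) never exhaust the available room before $\lambda$ elements have been chosen, and that the accumulated end-homogeneity really does upgrade to full homogeneity of the resulting subsequence, not merely indiscernibility for tuples of bounded length. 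Granting Erd\H{o}s--Rado, the model-theoretic content is otherwise minimal.
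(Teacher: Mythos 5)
Your reduction to finding a single subset $I\subseteq\kappa$ of size $\lambda$ that is simultaneously homogeneous for all the colourings $c_n$ is exactly where the argument breaks, and no choice of $\kappa$ in the $\beth$-hierarchy can repair it. Erd\H{o}s--Rado handles one arity at a time: iterating it through the arities produces a decreasing chain $X_1\supseteq X_2\supseteq\dots$ with $c_n$ constant on the $n$-element subsets of $X_n$, but after $\omega$ many steps the intersection may be finite or empty, and the end-homogeneity bookkeeping (which is how one passes from arity $n$ to arity $n+1$ \emph{inside} a single application of Erd\H{o}s--Rado) does not let you treat infinitely many arities at once. In fact the statement you are aiming at is not a theorem of ZFC: a set of size $\omega$ homogeneous for every $c_n$ simultaneously is precisely what the partition relation $\kappa\to(\omega)^{<\omega}_{2}$ (with more colours, a fortiori) asserts, and the least such $\kappa$ is by definition the Erd\H{o}s cardinal $\kappa(\omega)$, a large cardinal whose existence cannot be proved in ZFC. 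Conversely, coding an arbitrary colouring of the finite subsets of $\kappa$ by relations $R_n$ turns an infinite indiscernible subsequence of $\langle i : i<\kappa\rangle$ into such a homogeneous set, so the literal ``subsequence'' reading of the Fact, even for a countable language and $A=\emptyset$ and $\lambda=\omega$, already implies that partition property; for uncountable $\lambda$ the requirement is stronger still.

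What the Fact is standardly taken to mean --- and all that the paper actually uses, namely the ability to assume, after passing to a saturated extension, that a given sequence or array is ($A$-)indiscernible --- is the extraction version: for $\mu=|\mathcal L|+|A|+\aleph_0$ and $\kappa=\beth_{(2^{\mu})^+}$, any sequence $\langle a_i\rangle_{i<\kappa}$ admits an $A$-indiscernible sequence $\langle b_i\rangle_{i<\lambda}$, in a sufficiently saturated extension, each of whose finite types over $A$ is realized by some increasing tuple from the $a_i$'s. Your colourings $c_n$ are the right tool for \emph{that}: applying Erd\H{o}s--Rado arity by arity gives nested infinite sets $X_n$ on which $c_n$ is constant; the constant values $p_n=\tp(a_{j_1}\dots a_{j_n}/A)$ cohere (since each $X_{n+1}\subseteq X_n$ is infinite) and so form an EM-type over $A$, which compactness and saturation realize by an $A$-indiscernible sequence of any prescribed length $\lambda$. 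The sequence obtained is not a subsequence of the original one, but this is the correct, ZFC-provable form of the ``well known fact'' the authors invoke, and it suffices for every application in the paper.
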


\begin{defn}
A theory $T$ is \emph{dependent} if in no model $M$ of $T$ one can find an indiscernible sequence $\langle \bar a_i\rangle_{i\in \omega}$ and a formula $\phi( \bar x; \bar b)$ such that $\phi(\bar a_i; \bar b)$ holds in $M$ if and only if $i$ is odd.
\end{defn}

Let $G$ be a group definable in a dependent theory, let  $H$ be a subgroup of $G$ and let $\G$ be a $|H|^+$-saturated extension of $G$. The following two results summarize what we know about envelopes of $H$. The first was proven by Shelah in \cite{Sh783} and the second by de Aldama in \cite{dAl}.

\begin{fact} If $H$ is abelian, then there exists a definable abelian subgroup of $\G$ which contains $H$.
\end{fact}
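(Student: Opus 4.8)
The target is a definable abelian subgroup of $\mathcal{G}$ containing $H$, and my candidate is the second centralizer $A := C_{\mathcal{G}}(C_{\mathcal{G}}(H))$. First I would observe that $A$ automatically has two of the three desired properties. Since $H$ is abelian, $H \subseteq C_{\mathcal{G}}(H)$, and every element of $H$ commutes with every element of $C_{\mathcal{G}}(H)$ by the very definition of the latter, so $H \subseteq C_{\mathcal{G}}(C_{\mathcal{G}}(H)) = A$. Moreover, $H \subseteq C_{\mathcal{G}}(H)$ forces $A = C_{\mathcal{G}}(C_{\mathcal{G}}(H)) \subseteq C_{\mathcal{G}}(H)$; hence every element of $A$ both lies in $C_{\mathcal{G}}(H)$ and centralizes all of $C_{\mathcal{G}}(H)$, so $A$ centralizes itself, i.e.\ $A$ is abelian (equivalently $A = Z(C_{\mathcal{G}}(H))$).

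Thus the whole problem reduces to showing $A$ is \emph{definable}; and since the centralizer in $\mathcal{G}$ of a definable set is again definable, it suffices to prove that the a priori only type-definable subgroup $C_{\mathcal{G}}(H) = \bigcap_{h \in H} C_{\mathcal{G}}(h)$ is in fact definable, i.e.\ equals a finite subintersection $C_{\mathcal{G}}(h_1) \cap \dots \cap C_{\mathcal{G}}(h_k)$ with $h_1, \dots, h_k \in H$. The main tool is the Baldwin--Saxl condition, which holds in NIP theories: for the uniformly definable family $\{C_{\mathcal{G}}(g) : g \in \mathcal{G}\}$ of subgroups there is $n < \omega$ such that every finite intersection of members equals a subintersection of at most $n$ of them. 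Consequently the family $\mathcal{F} := \{\, C_{\mathcal{G}}(F) : F \subseteq H,\ |F| \le n \,\}$ is closed under finite intersections (again by Baldwin--Saxl, $C_{\mathcal{G}}(F_1) \cap C_{\mathcal{G}}(F_2) = C_{\mathcal{G}}(F_1 \cup F_2) \in \mathcal{F}$), hence downward directed, and $\bigcap \mathcal{F} = C_{\mathcal{G}}(H)$; so it is enough to produce a least element of $\mathcal{F}$.

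If $\mathcal{F}$ had no least element, then — choosing at each stage an element of $H$ that strictly shrinks the current centralizer and reducing back inside $\mathcal{F}$ via Baldwin--Saxl — one would build a strictly descending chain $C_{\mathcal{G}}(h_0) \supsetneq C_{\mathcal{G}}(h_0,h_1) \supsetneq C_{\mathcal{G}}(h_0,h_1,h_2) \supsetneq \dots$ with $h_i \in H$; stretching this to a long chain using the saturation of $\mathcal{G}$, picking witnesses $g_i \in C_{\mathcal{G}}(h_0,\dots,h_i) \setminus C_{\mathcal{G}}(h_0,\dots,h_{i+1})$, extracting via Erd\H{o}s--Rado an indiscernible sequence out of $\langle (h_i, g_i) \rangle_i$, and inspecting the formula $\phi(x;y) := \big([x,y] \ne 1\big)$ along it, one would obtain a violation of the definition of a dependent theory. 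Hence $\mathcal{F}$ has a least element $C_{\mathcal{G}}(F_0)$, and then $C_{\mathcal{G}}(F_0) \subseteq C_{\mathcal{G}}(h)$ for every $h \in H$ (otherwise $C_{\mathcal{G}}(F_0 \cup \{h\})$, reduced by Baldwin--Saxl to a member of $\mathcal{F}$, would be strictly smaller than $C_{\mathcal{G}}(F_0)$); therefore $C_{\mathcal{G}}(F_0) = C_{\mathcal{G}}(H)$, so $C_{\mathcal{G}}(H)$ is definable and $A = C_{\mathcal{G}}(C_{\mathcal{G}}(F_0))$ is the desired definable abelian subgroup. The main obstacle is exactly this last step, the definability of $C_{\mathcal{G}}(H)$: one must control the descending chain of centralizers — here is where both the NIP hypothesis and the saturation of $\mathcal{G}$ relative to $H$ are genuinely used — and care is needed so that extracting an indiscernible sequence does not destroy the strict descent, which is why one first passes to a sufficiently long chain.
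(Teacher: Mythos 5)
Your reduction to the definability of $C_\G(H)$ is where the argument breaks. The bicommutant observation is fine: \emph{if} $C_\G(H)$ were definable, then $Z(C_\G(H))$ would be a definable abelian group containing $H$. But the key claim --- that the family $\{C_\G(F): F\subseteq H \text{ finite}\}$ has a least element, i.e.\ $C_\G(H)=C_\G(F_0)$ for some finite $F_0\subseteq H$ --- is not established by your argument, and it is a much stronger chain condition than dependence is known to give. Baldwin--Saxl only says that each \emph{finite} intersection is a sub-intersection of bounded size; it does not forbid an infinite strictly descending chain $C_\G(h_0)\supsetneq C_\G(h_0,h_1)\supsetneq\dots$ with $h_i\in H$. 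Your proposed contradiction with NIP does not go through: the configuration you build only guarantees $[g_i,h_j]=1$ for $j\le i$ and $[g_i,h_{i+1}]\neq 1$, with the values for $j>i+1$ uncontrolled. At best this is a half-graph, i.e.\ the order property, which dependent theories allow (any unstable NIP theory has such a pattern); to contradict NIP you need unbounded alternation or shattering, and nothing in the construction produces it. Worse, after extracting an indiscernible sequence from $\langle (h_i,g_i)\rangle$ the truth value of $[g_i,h_j]=1$ for $i<j$ becomes homogeneous and may well be ``true'' throughout, destroying the only non-commutation you had; passing first to a longer chain does not repair this, because the problem is not the length of the chain but the absence of any forbidden pattern. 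Indeed, the fact as stated (Shelah, de Aldama) deliberately does \emph{not} assert that $C_\G(H)$ is definable, and one should not expect it to be.

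The actual proof controls the off-diagonal commutations by working with the partial type $\pi_H(x)=\{x\in C_\G(g): g\in\G,\ H\le C_\G(g)\}$, with parameters ranging over the saturated model rather than over $H$ --- exactly the device used in this paper's Lemma \ref{Lem_abl} for the NTP$_2$ analogue. Assuming no finite conjunction works, one constructs pairs $(a_i,b_i)$ realizing $\pi_H$ restricted to $H$ together with \emph{all previously constructed elements}, with $[a_i,b_i]\neq 1$; since every earlier $a_j,b_j$ centralizes $H$, the new pair commutes with all of them, so one gets $[a_i,b_j]=1$ for all $i\neq j$ while $[a_i,b_i]\neq 1$. Then the products $\prod_{i\in S}a_i$ show that $[x,y]=1$ shatters $\{b_j\}$, contradicting NIP. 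The definable abelian envelope obtained is a finite intersection $\bigcap_{i\le k}C_\G(g_i)$ with $g_i\in C_\G(H)$ taken from $\G$, any two of whose elements commute; it is in general neither $C_\G(C_\G(H))$ nor $Z(C_\G(H))$ over a finite subset of $H$. So the gap is concrete: your strategy needs the chain of centralizers of finite subsets of $H$ to stabilize, which your NIP argument does not prove and which the theorem does not require.
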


\begin{fact} If $H$ is a nilpotent (respectively normal solvable) subgroup of $G$ of class $n$, then there exists a definable nilpotent  (respectively normal solvable) subgroup of $\G$ of class $n$ which contains $H$.
\end{fact}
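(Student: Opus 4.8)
The plan is to prove both statements by induction on $n$, reducing each inductive step to the abelian case --- the first Fact above for the nilpotent statement, and a normal version of it obtained below for the normal solvable statement --- via a lifting through a definable quotient. Two preliminary tools are needed. First, the Baldwin--Saxl condition holds in dependent theories: the family $\{C_\G(g):g\in\G\}$ is uniformly definable, and there is a bound $N$ so that every finite intersection of members of this family equals a subintersection of size at most $N$; hence for any subgroup $K\le\G$ the centralizer $C_\G(K)=\bigcap_{g\in K}C_\G(g)$ equals $C_\G(K_0)$ for some finite $K_0\subseteq K$, so it is definable, and therefore so is $Z(C_\G(K))=C_\G(K)\cap C_\G(C_\G(K))$. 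Second, if $H_0\le G$ is abelian then $H_0\le Z(C_\G(H_0))$, a definable abelian subgroup; and if in addition $H_0\trianglelefteq G$, then $C_\G(H_0)$ and $Z(C_\G(H_0))$ are normal in $G$, and since they are definable over parameters in $G$ this normality transfers from the original model to $\G$ by elementarity, so $Z(C_\G(H_0))$ is a definable \emph{normal} abelian envelope of $H_0$. These settle the base case $n\le 1$ of both inductions. Note that each group constructed below is definable over a fixed finite set of parameters from the original model, so the recursion is genuinely on $n$ and no enlargement of parameters or of the ambient model is forced.

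\emph{Nilpotent case, $n\ge 2$.} Set $A:=Z(C_\G(H))$, a definable abelian group, and $K:=C_\G(A)$, also definable. One checks that $H\le K$ (as $A$ centralizes $H$, so $H$ centralizes $A$), that $A\le Z(K)$ (as $A\le C_\G(A)=K$ and $A$ centralizes $K$), and that $H\cap A=Z(H)$. Hence in the definable quotient $\bar G:=K/A$, again a group definable in a dependent theory and sufficiently saturated, the image $\bar H$ of $H$ is isomorphic to $H/Z(H)$, which is nilpotent of class $n-1$. By the induction hypothesis there is a definable $\bar D\le\bar G$, nilpotent of class $n-1$, containing $\bar H$. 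Its preimage $D\le K$ is definable, contains $HA\supseteq H$, satisfies $D/A\cong\bar D$ of class $n-1$, and has $A\le Z(D)$; a central extension of a group of class $n-1$ has class at most $n$, while $D\ge H$ forces class at least $n$, so $D$ is a definable nilpotent envelope of $H$ of class exactly $n$.

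\emph{Normal solvable case, $n\ge 2$.} Here $[H,H]$ is characteristic in $H$, hence normal in $G$, and has derived length $n-1$, so by the induction hypothesis it has a definable envelope $N_1\trianglelefteq\G$ containing $[H,H]$ that is solvable of derived length $n-1$. As $H$ and $N_1$ are normal in $G$, so is $HN_1$, and in the definable quotient $\bar G:=G/N_1$ the image $\bar H$ of $H$ is normal in the image of $G$ and is a quotient of $H/[H,H]$, hence abelian. By the normal abelian case applied inside $\bar G$ there is a definable normal abelian $\bar N\ge\bar H$; its preimage $N\le\G$ is definable, normal in $\G$, contains $HN_1\supseteq H$, and has $N/N_1=\bar N$ abelian with $N_1$ solvable of derived length $n-1$, so $N$ is solvable of derived length at most $n$, hence exactly $n$.

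The step I expect to be the real obstacle is the normality bookkeeping in the solvable case: $H$ is assumed normal in the original model $G$, not in $\G$, and it is not definable, so its normal closure in $\G$ is unavailable and need not be solvable. The remedy is that the centralizer-based constructions are canonical, so applied to a subgroup normal in $G$ they produce \emph{definable} subgroups normal in $G$, and for definable subgroups normality in $G$ upgrades to normality in $\G$ by elementarity; one therefore never needs more than $H\trianglelefteq G$. The remaining care needed is routine: checking that each quotient $K/A$ and $G/N_1$ is again a group definable in a dependent theory, is sufficiently saturated (the relevant cardinals only decrease), and is handled with parameters from the original model, so that the induction hypothesis applies to it without change.
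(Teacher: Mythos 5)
The fatal step is the very first one: your claim that in a dependent theory the centralizer $C_\G(K)$ of an \emph{arbitrary} subgroup $K$ is definable because Baldwin--Saxl bounds the width of finite intersections. Baldwin--Saxl gives exactly that every \emph{finite} intersection of the uniformly definable family $\{C_\G(g):g\in\G\}$ equals a subintersection of size at most $N$; it gives no descending chain condition, and without one the inference from finite to infinite intersections is a non sequitur. For instance the family $\{n!\,\mathbb{Z}:n\in\omega\}$ in $(\mathbb{Z},+)$ (or $\{p^n\mathbb{Z}_p\}$ in the NIP structure $\mathbb{Q}_p$) has width $1$ --- every finite intersection is a single member --- yet the full intersection equals no finite subintersection and need not be definable. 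It is stability, not dependence, that upgrades Baldwin--Saxl to the chain condition yielding $C_G(K)=C_G(K_0)$ for some finite $K_0\subseteq K$; your base case $H\le Z(C_\G(H))$ is precisely the classical stable argument (Poizat), and this is exactly what breaks beyond stability. A structural sanity check confirms something must be wrong: your construction would produce an envelope definable over finitely many parameters from $H$ itself, living already in $G$, with no saturation hypothesis --- a strictly stronger statement than the Fact, which genuinely requires the $|H|^+$-saturated extension $\G$. This is why Shelah's abelian case and de Aldama's nilpotent/solvable extension (the paper quotes this Fact from \cite{dAl} without reproving it), as well as the NTP$_2$ analogue in Lemma \ref{Lem_abl} here, work instead with the partial types $\pi_H$, $\pi_{Z(H)}$ realized in $\G$ and extract by compactness a finite intersection of centralizers of elements of $\G$ (not of $H$) containing $H$.

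Once the base case falls, the induction falls with it: $A=Z(C_\G(H))$ and $K=C_\G(A)$ in your nilpotent step, and the definable normal abelian envelope in your solvable step, are all unavailable as stated. To be fair, the purely group-theoretic skeleton you set up is sound and close in spirit to de Aldama's actual proof: $H\cap Z(C_\G(H))=Z(H)$ does hold, a central extension of a class-$(n-1)$ group has class at most $n$, interpretable quotients of $\G$ remain dependent and sufficiently saturated, and quotienting by a definable envelope of $[H,H]$ is the right move in the solvable case. The entire content of the theorem, however, lies in producing the definable abelian (respectively centralizing) groups in the first place, and that cannot be done via centralizers of infinite subgroups; it needs the type-based compactness argument over the saturated extension.
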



\bigskip

We now turn to the simple theory context.

\begin{defn}
A theory has the \emph{tree property}  if there exists a formula $\phi(\bar x; \bar y)$, a parameter set $\{\bar a_\mu : \mu \in  \omega^{< \omega}\}$ and $ k \in \omega$ such that
\begin{itemize}
\item  $\{ \phi (\bar x ; \bar a_{\mu^\smallfrown i} : i < \omega \}$ is $k$-inconsistent for any $\mu \in   \omega^{< \omega}$;
\item $\{ \phi (\bar x ;\bar a_{s \upharpoonright n} : s \in \omega^\omega, n \in \omega\}$ is consistent.
\end{itemize}

A theory is called \emph{simple} if it does not have the tree property.
\end{defn}

As the following remark shows, it is impossible to get envelopes in the same way one could achieve them in the stable and dependent case, and one must allow for some ``finite noise''.

\begin{remark}\label{extraspecial}
Let $T$ be the theory of an infinite vector space
over a finite field together with a skew symmetric bilinear form. Then $T$ is simple, and in any model of $T$
one can define an infinite ``extraspecial $p$-group'' $G$, i.\ e.\ every element of $G$ has order $p$, the center of $G$ is cyclic of order $p$ and is equal to the derived group of $G$. This group has  SU-rank 1. It has infinite abelian subgroups but no abelian subgroup of finite index, as the center is finite and any centralizer has finite index in $G$. However, if G had an infinite definable abelian subgroup, that abelian group would have SU-rank 1, hence would be of finite index in G, a contradiction.

A model theoretic study of extra special $p$-groups can be found in \cite{Fe}.
\end{remark}

So one has to find a version of the theorem which is adapted to the new context. For this we will need the following definitions:

\begin{defn}
A group $G$ is called \emph{finite-by-abelian} if there exists a finite normal subgroup $F$ of $G$ such that $G/F$ is abelian.
\end{defn}

\begin{defn}
A subgroup $H$ of a group $G$ is an \emph{almost abelian group} if the centralizer of any of its elements has finite index in $H$. If the index of these elements can be bounded by some natural number we call it an \emph{bounded almost abelian group}.
\end{defn}

Almost abelian groups are also known as FC-groups, where FC-group stands for ``finite conjugation''-group.

The following classical group theoretical result, which is a theorem of Neumann, will provide a link between the two notions.

\begin{fact}\label{fac_neum}  \cite[Theorem 3.1]{bhn}.
Let $G$ be a bounded almost abelian group. Then its derived group is finite. In particular, $G$ is finite-by-abelian.
\end{fact}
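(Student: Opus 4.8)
The plan is to recognize this as B.~H.~Neumann's theorem on groups with boundedly finite conjugacy classes and to run through the classical argument. Saying that $G$ is bounded almost abelian means exactly that there is a fixed $n\in\omega$ with $[G:C_G(g)]\le n$, equivalently $|g^G|\le n$, for every $g\in G$. Since $G'$ is a normal subgroup with $G/G'$ abelian, the ``in particular'' clause is immediate once $G'$ is shown to be finite, so that is the sole goal.

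First I would reduce to the finitely generated case. Every element of $G'$ is a product of finitely many commutators, hence lies in $H'$ for some finitely generated subgroup $H\le G$; thus $G'$ is the directed union of the subgroups $H'$, where $H$ ranges over the finitely generated subgroups of $G$, each of which is again bounded almost abelian with the \emph{same} bound $n$. Consequently it suffices to produce a bound $f(n)$, depending on $n$ alone, with $|H'|\le f(n)$ for every finitely generated $H\le G$: if $G'$ had more than $f(n)$ elements, finitely many of them would already lie in a common such $H'$, a contradiction.

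Next, the finitely generated case itself is easy: if $H=\langle x_1,\dots,x_r\rangle$ then $Z(H)=\bigcap_{i=1}^r C_H(x_i)$ has index at most $n^r$ in $H$, so $Z(H)$ has finite index in $H$; by Schur's theorem a group whose centre has finite index has finite derived subgroup, with order bounded in terms of that index, so $H'$ is finite.

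The main obstacle is that the bound just produced depends on the number $r$ of generators, whereas the reduction step demands a bound depending on $n$ alone, and closing this gap is the heart of Neumann's argument. Concretely, for each $x\in G$ the set $\{[x,y]:y\in G\}=x^{-1}x^G$ of ``commutator values at $x$'' has at most $n$ elements, and one invokes Neumann's covering lemma --- a group that is a union of finitely many cosets $g_1H_1\cup\dots\cup g_kH_k$ of subgroups must have some $H_i$ of finite index --- to control how the derived subgroup can grow as generators are adjoined, ultimately bounding either the number of generators needed to capture $H'$ or the total number of commutator values, purely in terms of $n$. (As a sanity check: already for a two-generated subgroup $\langle x,y\rangle$ the centre has index at most $n^2$, so Schur gives $|\langle x,y\rangle'|\le f(n^2)$ and hence every single commutator of $G$ has order bounded by $f(n^2)$; the real work is passing from ``each commutator'' to ``all of $G'$''.) Feeding the resulting uniform bound back into the reduction step shows $G'$ is finite, and therefore $G$ is finite-by-abelian.
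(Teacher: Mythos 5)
The paper does not prove this statement at all: it is quoted as a classical fact, with a citation to B.~H.~Neumann, and used as a black box. So the only question is whether your argument actually establishes the theorem, and it does not. Your framing is fine as far as it goes: the reduction to finitely generated subgroups is correct, and you rightly observe that it requires a \emph{uniform} bound $|H'|\le f(n)$ over all finitely generated $H$ with classes of size at most $n$ (mere finiteness of each $H'$ would not suffice, since a directed union of finite groups of unbounded order can be infinite); the Schur step, giving $|H'|$ finite with a bound depending on $n$ and the number $r$ of generators, is also correct, as is the sanity check that every commutator has order bounded in terms of $n$. But the passage from a bound depending on $(n,r)$ to one depending on $n$ alone is precisely the content of Neumann's theorem, and there you only gesture: ``one invokes Neumann's covering lemma \dots ultimately bounding either the number of generators needed to capture $H'$ or the total number of commutator values, purely in terms of $n$.'' No argument is given for either bound, and neither follows from the bare statement of the covering lemma (that a group covered by finitely many cosets has one of the underlying subgroups of finite index). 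In particular, bounding the set of commutator values in terms of $n$ is essentially the crux of the theorem itself: once that is known, one still passes from a finite normal set of elements of bounded order to a finite subgroup (Dietzmann's lemma, or another Schur-type step), which is routine, but the finiteness of the set of commutators is nowhere established in your text.

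So the verdict is a genuine gap, located exactly at the theorem's heart. Neumann's original argument (and the later quantitative treatments of Wiegold, Tomkinson, Robinson) requires a careful choice of elements and cosets to which the covering lemma is applied, or an induction with explicit bounds; none of that appears here. What you have written is an accurate identification of which classical theorem is being quoted, together with a correct reduction and correct auxiliary steps, but not a proof: either the missing combinatorial step must be supplied in full, or the statement should remain, as in the paper, a cited fact.
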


Now we are ready to announce the abelian version for simple theories proven by Milliet as \cite[Proposition 5.6.]{Mi}.

\begin{fact}
Let $G$ be a group definable in a simple theory and let  $H$ be an abelian subgroup of $G$. Then there exists a definable finite-by-abelian subgroup of $G$ which contains $H$.
\end{fact}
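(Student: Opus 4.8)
The plan is to reduce the statement, by means of the chain condition on centralizers available in any simple theory, to a situation covered by Neumann's theorem (Fact~\ref{fac_neum}). First, since $H$ is abelian, it is contained in $C_G(h)$ for every $h \in H$, hence in the type-definable group $\bigcap_{h \in H} C_G(h)$. By the chain condition for finite intersections of centralizers valid in simple theories, there is a finite tuple $\bar h = (h_1, \dots, h_m) \in H^m$ such that the definable group $A := C_G(\bar h)$ is minimal, up to commensurability, among the definable subgroups of $G$ of the form $C_G(\bar g)$ with $\bar g$ a finite tuple from $H$. Then $A$ is definable, $H \le A$, and minimality forces $C_A(h) = C_G(\bar h, h)$ to have finite index in $A$ for every $h \in H$.

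\emph{A uniform bound.} The technical heart of the argument is to upgrade this to a \emph{uniform} bound: after enlarging $\bar h$ and shrinking $A$ finitely many times, still using the chain condition (now to control the sizes of the commensurability indices, not merely their finiteness), one obtains a natural number $k$ with $[A : C_A(h)] \le k$ for all $h \in H$; equivalently, the uniformly definable family $\{ C_A(h) : h \in H \}$ is uniformly commensurable, and every $h \in H$ has $A$-conjugacy class of size at most $k$.

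\emph{Schlichting and Neumann.} Let $X := \{ a \in A : |a^A| \le k \}$, a definable, symmetric, conjugation-invariant subset of $A$ containing $H$. The family $\{ C_A(a) : a \in X \}$ is uniformly definable, invariant under conjugation by $A$, and pairwise uniformly commensurable, since for $a, b \in X$ one has $[C_A(a) : C_A(a) \cap C_A(b)] = [C_A(a) : C_A(a, b)] \le [A : C_A(b)] \le k$. Schlichting's theorem then yields a definable subgroup $N \trianglelefteq A$ that is uniformly commensurable with every $C_A(a)$, $a \in X$; in particular $[A : N]$ is finite. From $N$ one extracts a definable subgroup $B$ with $H \le B \le A$ (a suitable finite-index definable subgroup of $A$ containing both $H$ and a conjugation-stable piece of $N$) and checks that there is a fixed integer bounding $|b^B|$ for all $b \in B$; that is, $B$ is a bounded almost abelian group. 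By Fact~\ref{fac_neum} its derived subgroup is finite, so $B$ is finite-by-abelian, and $H \le B \le G$, as required.

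The hardest part is the combination of the last two steps: one must simultaneously keep $H$ inside a \emph{definable} group and force \emph{every} element of that group---not just the elements of $H$---to have uniformly bounded conjugacy class, and these demands pull in opposite directions. Passing from mere finiteness to a uniform bound on the relevant indices is precisely what allows Neumann's theorem to turn ``almost abelian'' into ``finite-by-abelian'', and it is here that simplicity, through the chain condition and Schlichting's theorem, enters essentially; the extraspecial example of Remark~\ref{extraspecial} shows that one cannot hope to do better than ``finite-by-abelian''.
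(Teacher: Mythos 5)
Your opening steps are fine for a simple theory: the chain condition on centralizers up to finite index does give a tuple $\bar h\in H^{<\omega}$ with $A=C_G(\bar h)\supseteq H$ and $[A:C_A(h)]<\infty$ for all $h\in H$, and its uniform (compactness-strengthened) form gives the bound $k$; the commensurability computation for $\{C_A(a):a\in X\}$ and the application of Schlichting (Fact \ref{fac_schl}) are also correct. The genuine gap is the last step. "From $N$ one extracts a definable subgroup $B$ with $H\le B\le A$ (a suitable finite-index definable subgroup of $A$ \dots) and checks that there is a fixed integer bounding $|b^B|$" is not an argument but a restatement of the theorem, and as described it is false: the envelope cannot in general be found as a finite-index subgroup of $A$. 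Take $G=A_0\times \mathrm{SL}_2(K)$ with $K$ algebraically closed and $A_0$ an infinite definable abelian group, and $H=A_0\times\{1\}$. Every centralizer of a finite tuple from $H$ is all of $G$, so your construction yields $A=G$; noncentral elements of $\mathrm{SL}_2(K)$ have infinite conjugacy classes, so $X=Z(G)$ and Schlichting returns $N=A$. But every definable finite-index subgroup of $A$ contains $\{1\}\times \mathrm{SL}_2(K)$ (connectedness), hence contains elements with infinite conjugacy classes, so no finite-index $B\le A$ is almost abelian; the correct envelope, $A_0\times Z(\mathrm{SL}_2(K))$, has infinite index in $A$. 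So after Schlichting one must cut much deeper inside $A$ (essentially to an almost centralizer in the sense of Definition \ref{def_AlmCen}), and proving that such a subgroup is definable, contains $H$, and has uniformly bounded conjugacy classes is exactly the missing content; it is the part of Milliet's proof that your sketch replaces by the phrase "one checks".

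Note also that the paper does not prove this Fact (it is quoted from Milliet); its own route to the stronger NTP$_2$ statement, Theorem \ref{thm_main}(1), avoids chain conditions altogether: Lemma \ref{Lem_abl} plus compactness produces directly a definable finite intersection of centralizers containing $H$ in which every element's centralizer has index at most a fixed $n$, and Fact \ref{fac_neum} then gives finite-by-abelian. If you want to argue via simplicity-specific tools, you need to carry out the almost-centralizer/Schlichting analysis in full; alternatively, the Lemma \ref{Lem_abl} argument specializes to simple theories and closes the gap with no chain condition at all.
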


In the nilpotent and solvable case one must additionally include other definitions to account for the ``by finite'' phenomenon.

\begin{defn}
Let $G$ be a group and $H$ and $K$ be two subgroups of $G$.
%
We say that $H$ is \emph{almost contained} in $K$, denoted by  $H\ls K$,   if $[H : H \cap K]$ is finite.
%
\end{defn}

The following was proved by Milliet  in \cite{Mi2}:
\begin{fact} Let $G$ be a group definable in a simple theory and let $H$ be a nilpotent  (respectively solvable)  subgroup of $G$ of class $n$. Then one can find a definable nilpotent  (respectively solvable)  subgroup of class at most $2n$ which almost contains $H$.
\end{fact}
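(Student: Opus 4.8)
The plan is to prove the statement by induction on the class $n$, running the nilpotent and solvable cases in parallel and leaning on three features of simple theories: the abelian envelope result stated above; the chain condition for uniformly definable families of subgroups (an intersection of such a family equals, up to finite index, a finite subintersection), which lets one replace a type-definable subgroup such as the centralizer of an infinite set, or a connected component, by a definable subgroup of finite index over it; and Schlichting's theorem in definable form (a uniformly definable, uniformly commensurable, $\Gamma$-invariant family of subgroups is commensurable with a definable $\Gamma$-invariant subgroup, which can be chosen inside a finite intersection of members of the family, so that it inherits nilpotency- and solvability-class bounds). I would in fact prove the slightly stronger statement in which, given any subgroup $\Gamma\le G$ normalizing $H$, the envelope is required to be $\Gamma$-invariant; this is what makes the induction carry.

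For the base case $n=1$ ($H$ abelian) I would take a definable finite-by-abelian $K\supseteq H$ from the abelian envelope result, with $F\trianglelefteq K$ finite and $K/F$ abelian. Since each element of $F$ has a finite conjugacy class in $K$, the centralizer $C_K(F)$ of the finite set $F$ is a definable subgroup of finite index in $K$, so $H\ls C_K(F)$; and $[C_K(F),C_K(F)]\le[K,K]\le F\le Z(C_K(F))$ shows $C_K(F)$ is nilpotent of class $\le 2$. This settles $n=1$ for both statements, and the same ``centralizer of the finite radical'' move will be reused throughout.

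For the inductive step, in the solvable case with $H$ of class $n\ge 2$ I would put $H_0:=[H,H]$ (normal in $H$, solvable of class $n-1$) and apply the strengthened inductive hypothesis to $H_0$ with $\Gamma:=H$, getting a definable $H$-invariant solvable $S$ of class $\le 2(n-1)$ with $H_0\ls S$. Then $S\trianglelefteq N_G(S)$, the quotient $Q:=N_G(S)/S$ is an interpretable group, and the image $\bar H:=HS/S$ of $H$ in $Q$ has $[\bar H,\bar H]\cong[H,H]/([H,H]\cap S)$ finite; so $\bar H$ is a bounded almost abelian subgroup of $Q$, hence finite-by-abelian by Neumann's theorem (Fact~\ref{fac_neum}). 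A $\Gamma$-equivariant version of the abelian envelope result inside the simple group $Q$, followed by the centralizer-of-the-finite-radical move, produces a definable $\Gamma$-invariant $\bar N\le Q$, nilpotent of class $\le 2$, with $\bar H\ls\bar N$; pulling $\bar N$ back along $N_G(S)\to Q$ gives a definable $\Gamma$-invariant subgroup $N$ with $H\ls N$ which, having the solvable normal subgroup $S$ of class $\le 2(n-1)$ and solvable quotient $N/S\cong\bar N$ of class $\le 2$, is solvable of class $\le 2n$. The nilpotent case needs an analogous but more delicate argument — run along the upper central series so that the relevant extensions are central up to finite index, since a group with a nilpotent normal subgroup and finite-by-abelian quotient need not be nilpotent — and this is where the precise ``class $\le 2$'' information about $C_K(F)$, rather than just ``finite-by-abelian'', is indispensable.

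The hard part, I expect, is the clause ``a $\Gamma$-equivariant version of the abelian envelope result'', i.e.\ proving that a $\Gamma$-invariant bounded almost abelian subgroup of a group definable in a simple theory lies inside a $\Gamma$-invariant definable finite-by-abelian subgroup, and, more generally, keeping \emph{every} object definable: the subgroups one actually wants to mod out by — centers, derived subgroups, centralizers of infinite sets, pullbacks of quotients — are merely type-definable, so one must repeatedly approximate them by definable subgroups of finite index via the chain condition (this is exactly what forces the conclusion to be ``almost containment'' rather than containment), then repair the loss of $\Gamma$-invariance with Schlichting's theorem, taking care that the Schlichting subgroup sits inside a finite intersection of conjugates so the class is inherited rather than increased — and arranging all of this so that each of the $n$ stages raises the class by at most $2$, which is the origin of the bound $2n$.
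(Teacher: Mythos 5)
This statement is quoted by the paper from Milliet \cite{Mi2} and is not proved there, so the only in-paper yardstick is the machinery the authors deploy for the NTP$_2$ analogue, Theorem~\ref{thm_main}. Measured against that, your solvable half is essentially right and runs parallel to the paper's route (Proposition~\ref{Prop_AlmSol} plus Fact~\ref{lem_FCsol2}): one derived-series layer per induction step, each layer costing derived length two via the abelian envelope, Neumann's theorem (Fact~\ref{fac_neum}) and the centralizer of the finite derived subgroup; your base-case computation $[C_K(F),C_K(F)]\le F\cap C_K(F)\le Z(C_K(F))$ is correct, and derived length is additive over extensions, so pulling $\bar N$ back over $S$ indeed gives class $\le 2n$. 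Two points need repair but are repairable: the abelian envelope is stated for abelian subgroups, so to envelope the finite-by-abelian $\bar H$ you should first pass to $N_Q(F)/F$ with $F=[\bar H,\bar H]$ finite; and the $\Gamma$-equivariance you rely on (with the inductive call made for $\langle\Gamma,H\rangle$, not just $H$) does not come from Schlichting applied directly to the $\Gamma$-conjugates of an envelope, since distinct envelopes of $H$ need not be commensurable --- one must first shrink to a finite subintersection of conjugates using the simple-theoretic chain condition up to finite index, and only then invoke Fact~\ref{fac_schl}.

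The genuine gap is the nilpotent case, which is the hard half of the statement. Your scheme (quotient by an envelope of a lower-class piece, treat a finite-by-abelian quotient, pull back) works for solvability precisely because derived length adds over extensions; for nilpotency it does not, as you yourself note, and the sentence ``run along the upper central series, more delicate'' is where the actual proof has to live. What is missing is the mechanism that makes the quotient step central: one needs definable subgroups $A$ and $K$ with $Z(H)$ almost contained in $A$, $H\le K$, and $[A,K]$ finite, which in a simple theory comes from the chain condition on centralizers up to finite index together with Schlichting (to replace $A$ by a $K$-normal commensurable copy) and the symmetry of the almost centralizer (the analogues of Lemma~\ref{Cor_nil} and Facts~\ref{Fac_sym}, \ref{Fac_[H,K]fin}, \ref{fac_schl}); then, with $X=[A,K]$ finite, the groups $N_1=Z(C_K(X))$ and $N_2=Z_2(C_K(X))$ supply exactly the two central-series steps per stage, and the point of the construction is that $Z(H)$ lands in $N_2$ up to finite index, so the class of the image of $H$ in $C_K(X)/N_2$ drops and the induction can continue. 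This is the skeleton of the paper's own proof of Theorem~\ref{thm_main}(3), and none of it is present in your sketch; the ``class $\le 2$'' information about $C_K(F)$ is not a substitute, because what you must control is not the class of the quotient but the fact that the subgroup you quotient by sits inside the second center of a definable finite-index subgroup of the eventual envelope.
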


If we additionally assume that the nilpotent subgroup $H$ is normal in $G$, one can ask for the definable subgroup which almost contains $H$ to be normal in $G$ as well. Hence the product of these two groups is a definable normal nilpotent subgroup of $G$ of class at most $3n$ which \textbf{contains} $H$.


\section{Main result}

The purpose of this paper is to extend the above results to the context of $NTP_2$ theories, which expand both simple and dependent theories.

\begin{defn}\label{NTP2}
A theory has the \textbf{tree property of the second kind} (refered to as TP$_2$) if there exists a formula $\psi(\bar x;\bar y)$, an array of parameters $(\bar a_{i,j} : i, j \in \omega)$, and $k \in \omega$ such that:
\begin{itemize}
\item $\{ \psi(\bar x;\bar a_{i, j}): j\in \omega\}$ is k-inconsistent for every $i \in \omega$;
\item $\{ \psi(\bar x;\bar a_{i, f(i)}): i \in \omega\}$ is consistent for every $f: \omega \rightarrow  \omega$.
\end{itemize}
A theory is called \textbf{NTP$_2$} if it does not have the TP$_2$.
\end{defn}

\begin{observation}
By compactness, having the tree property of the second kind is equivalent to the following finitary version:

A theory has  TP$_2$ if there exists a formula $\psi(\bar x;\bar y)$ and a natural number $k$ such that for any
natural numbers $n$ and $m$ we can find an array of parameters $(\bar a_{i,j} : 1\leq i\leq n ,\  1\leq j\leq m)$ satisfying the following properties:
\begin{itemize}
\item $\{ \psi(\bar x;\bar a_{i, j}): j  \leq m\}$ is k-inconsistent for every $i $;
\item $\{ \psi(\bar x;\bar a_{i, f(i)}): i \leq n\}$ is consistent for every $f: \{1, \dots, n\} \rightarrow  \{1, \dots, m\}$.
\end{itemize}

\end{observation}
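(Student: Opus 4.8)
The plan is to prove the two directions of the equivalence separately. The implication from the infinitary formulation of TP$_2$ (Definition \ref{NTP2}) to the finitary one is immediate, and the converse is a routine compactness argument; I do not expect a genuine obstacle here, only some bookkeeping with indices and with the elementary fact that a single sentence can refer to only finitely many of the parameters in play.

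For the easy direction, suppose $\psi(\bar x;\bar y)$, an array $(\bar a_{i,j}:i,j\in\omega)$ and $k\in\omega$ witness TP$_2$ in the sense of Definition \ref{NTP2}. Given $n,m\in\omega$, I would take the sub-array $(\bar a_{i,j}: 1\le i\le n,\ 1\le j\le m)$ (after shifting indices to start at $1$). Each of its rows is still $k$-inconsistent, since a $k$-element subset of the columns $\{1,\dots,m\}$ is in particular a $k$-element subset of $\omega$. And any $f\colon\{1,\dots,n\}\to\{1,\dots,m\}$ extends to some $g\colon\omega\to\omega$ (send the remaining rows to column $1$, say), so $\{\psi(\bar x;\bar a_{i,f(i)}):i\le n\}$ is a subset of the consistent set $\{\psi(\bar x;\bar a_{i,g(i)}):i\in\omega\}$, hence consistent. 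Thus $\psi$ and $k$ witness the finitary version.

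For the converse, assume $\psi(\bar x;\bar y)$ and $k$ satisfy the finitary condition. I would expand the language by new constant tuples $(\bar c_{i,j}:i,j\in\omega)$, each matching the length of $\bar y$, and let $T'$ be $T$ together with (i) the sentence $\neg\exists\bar x\bigwedge_{j\in S}\psi(\bar x;\bar c_{i,j})$ for every $i\in\omega$ and every $k$-element $S\subseteq\omega$, and (ii) the sentence $\exists\bar x\bigwedge_{i\in D}\psi(\bar x;\bar c_{i,h(i)})$ for every finite $D\subseteq\omega$ and every $h\colon D\to\omega$. Each such sentence mentions only finitely many of the $\bar c_{i,j}$. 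To check that $T'$ is finitely satisfiable, take a finite $\Sigma\subseteq T'$; all constants occurring in $\Sigma$ have indices inside some $\{1,\dots,n\}\times\{1,\dots,m\}$, and the finitary hypothesis applied to this $n$ and $m$ yields an array $(\bar a_{i,j})$ on that index set. Interpreting the relevant $\bar c_{i,j}$ as $\bar a_{i,j}$ (and the rest arbitrarily) satisfies the instances of (i) in $\Sigma$ because the rows of the finitary array are $k$-inconsistent, and the instances of (ii) because each such finite path sits inside a full path of the finitary array, which is consistent. Hence $T'$ is consistent, and in a model of $T'$ the constants give an array $(\bar a_{i,j}:i,j\in\omega)$ whose rows are $k$-inconsistent by (i) and every finite subset of whose paths $\{\psi(\bar x;\bar a_{i,f(i)}):i\in\omega\}$ is satisfiable by (ii), so each path is a consistent partial type. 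Therefore $\psi$ and $k$ witness TP$_2$ as in Definition \ref{NTP2}. The only point requiring care is that ``consistent'' for an infinite set of formulas means ``finitely satisfiable'', which is exactly what scheme (ii) encodes.
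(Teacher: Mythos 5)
Your proof is correct and follows exactly the route the paper intends: the paper offers no details beyond the phrase ``by compactness,'' and your two directions (restricting the infinite array for the easy implication, and the standard expansion by constants $\bar c_{i,j}$ with the schemes expressing row $k$-inconsistency and finite path satisfiability for the converse) are precisely the routine argument being alluded to. The bookkeeping points you flag (only finitely many constants per sentence, and ``consistent'' for an infinite path meaning finitely satisfiable) are handled correctly, so nothing is missing.
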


In this paper we will prove the following.

\begin{theorem} \label{thm_main}
Let $G$ be a group definable in an NTP$_2$ theory, $H$ be a subgroup of $G$ and $\G$ be an $|H|^+$-saturated extension of $G$. Then the following holds:

\begin{enumerate}
\item If $H$ is abelian, then there exists a definable almost abelian (thus finite-by-abelian) subgroup of $\G$ which contains $H$. Furthermore, if $H$ was normal in $G$, the definable finite-by-abelian subgroup can be chosen to be normal in $G$ as well.\label{thm main 1}

\item If $H$ is a normal solvable subgroup of class $n$, then there exists a definable normal solvable subgroup $S$ of $ \G$ of class at most $2n$ which almost contains $H$. 

\item If $H$ is a nilpotent subgroup of class $n$, then there exists a definable nilpotent subgroup $N$ of $\G$ of class at most $2n$ which almost contains $H$. Moreover, if $H$ is normal in $G$, the group $N$ can be chosen to be normal in $G$ as well.
\end{enumerate}
\end{theorem}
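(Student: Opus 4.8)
The overall strategy is to reduce the three cases to the abelian case, and to reduce the abelian case to a statement about families of centralizers in the saturated group $\G$. The key combinatorial input should be an NTP$_2$ analogue of Milliet's chain-condition arguments in simple theories, namely: if $H$ is abelian, then the family of centralizers $\{C_\G(h) : h \in H\}$ cannot have an infinite ``broom'' of finite-index drops, so one can pass to a finite subintersection capturing $H$ up to finite index. Concretely, I would first establish a local lemma of the following shape: in an NTP$_2$ theory, for any uniformly definable family of subgroups $\{K_a\}$ closed under finite intersection, and any subgroup $H$ contained in each $K_a$, there is a single $K_a$ with $[K_a : K_a \cap C_\G(\cdot)]$ bounded over $H$ — the point being that an unbounded such sequence produces an inp-pattern (a TP$_2$ array) from the formula expressing ``$x$ centralizes $y$'' together with coset formulas, contradicting NTP$_2$. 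This is the step I expect to be the main obstacle, since in NTP$_2$ one does not have the full symmetry and local character of forking available in simple theories; I would either invoke the existing theory of burden/inp-patterns for NTP$_2$ groups (Chernikov--Kaplan style chain conditions on definable subgroups), or build the array by hand from the indiscernible-sequence characterization of TP$_2$ recalled in the preliminaries.

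Granting such a chain condition, the abelian case of Theorem \ref{thm_main}(1) proceeds as follows. Let $A = \bigcap_{h \in H} C_\G(h)$, the centralizer of $H$ in $\G$; it is a subgroup of $\G$ containing $H$ (since $H$ is abelian), but it need not be definable as $H$ is not. Using $|H|^+$-saturation together with the chain condition, I would find finitely many elements $h_1, \dots, h_n \in H$ such that $C := C_\G(h_1) \cap \dots \cap C_\G(h_n)$ satisfies $C \ls A$ in the sense that $C \cap H$ has finite index in $H$ — more precisely one iterates: having built $C$ capturing most of $H$, any element of $H$ not centralized-modulo-finite by $C$ would extend the pattern. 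The group $C$ is definable, and it is \emph{almost abelian}: every element of $H$ has finite-index centralizer in $C$ by construction, and since $H$ is almost all of $C$ (up to finite index) a further application of the chain condition upgrades this to: every element of $C$ has finite-index centralizer in $C$. Then Fact \ref{fac_neum} (Neumann's theorem) gives that $C$ is finite-by-abelian. For normality, if $H \trianglelefteq G$ one replaces $C$ by $\bigcap_{g \in G} C^g$; this is still definable, still almost abelian, still contains $H$ (as $H$ is normal and abelian, each conjugate $C^g$ contains $H$ up to finite index, and one uses the chain condition once more to see the full intersection does too), and is now $G$-invariant, hence normal in $G$.

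For part (2), suppose $H$ is normal solvable of class $n$, with derived series $H = H^{(0)} \trianglerighteq H^{(1)} \trianglerighteq \dots \trianglerighteq H^{(n)} = 1$. Each quotient $H^{(i)}/H^{(i+1)}$ is abelian and normal in $G/H^{(i+1)}$, but one cannot directly quotient since $H^{(i+1)}$ is not definable; instead I would argue by induction on $n$, building the definable solvable envelope layer by layer inside $\G$, using at each stage the normal almost-abelian envelope from part (1) applied to the relevant section, and checking that the derived length at most doubles because each abelian layer is replaced by a finite-by-abelian (hence derived length $\le 2$) definable layer. The bookkeeping is that $n$ abelian layers become $n$ metabelian layers, giving class at most $2n$; normality in $G$ is preserved throughout by taking $G$-invariant intersections as in part (1).

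For part (3), if $H$ is nilpotent of class $n$, I would run the analogous layered induction along the lower central series $H = \gamma_1(H) \trianglerighteq \gamma_2(H) \trianglerighteq \dots \trianglerighteq \gamma_{n+1}(H) = 1$, using part (1) to produce definable finite-by-abelian envelopes of the successive central sections and then assembling them; each finite-by-abelian layer contributes nilpotency class at most $2$, so $n$ layers give class at most $2n$. The additional subtlety is ensuring the assembled group is genuinely nilpotent (not merely solvable): here one uses that the ambient $H$ is nilpotent to control commutators between layers, together with a finiteness argument showing the ``finite noise'' in each layer is absorbed. When $H \trianglelefteq G$, taking $G$-invariant intersections at every step as before makes the final $N$ normal in $G$. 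Throughout, the recurring technical engine is the NTP$_2$ chain condition on uniformly definable families of subgroups and their finite-index intersections, which is the one genuinely new ingredient over the simple-theory proof of Milliet and the dependent-theory proof of de Aldama.
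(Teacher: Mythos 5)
Your proposal rests on a tool the paper deliberately avoids, and which is not actually available: a chain condition up to finite index for uniformly definable families of subgroups (here, centralizers $C_\G(h)$, $h\in H$) in an NTP$_2$ theory. The known NTP$_2$ chain conditions in the Chernikov--Kaplan style apply to families of \emph{normal} subgroups, and no analogue of the Wagner/Milliet finite-index chain condition for arbitrary centralizers is known in NTP$_2$; the paper states explicitly that, unlike Milliet's simple-theory argument, it does not rely on any such chain condition but works directly with the forbidden array. The actual engine is Lemma \ref{Lem_abl}: one considers \emph{two} partial types over parameters in the saturated extension $\G$, namely $\pi_{Z(H)}(x)=\{x\in C_\G(g): Z(H)\le C_\G(g)\}$ and $\pi_H(x)=\{x\in C_\G(g): H\le C_\G(g)\}$, and shows that for some $n$, any $n+1$ realizations of $\pi_{Z(H)}$ contain two elements congruent modulo $C_\G(y)$ for every $y\models\pi_H$; the proof builds a TP$_2$ array out of the coset formulas $x\in a_{l,i}C_\G(b_l)$, where consistency along paths comes from the commutation relations forced by the types. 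The crucial de Aldama-style point, absent from your sketch, is that the parameters are elements of $\G$ centralizing $H$ (resp.\ $Z(H)$), not elements of $H$ itself, and that the two types play asymmetric roles.

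Even granting your chain condition, your abelian case contains a false step: ``since $H$ is almost all of $C$ (up to finite index)''. A definable $C\supseteq H$ inside the $|H|^+$-saturated $\G$ will in general contain $H$ with infinite index (this is exactly why envelopes rather than finite-index approximations of $H$ are the goal), so knowing that every element of $H$ has finite-index centralizer in $C$ does not upgrade to $C$ being almost abelian. In the paper, bounded almost-abelianness of the envelope (needed so that Neumann's theorem, Fact \ref{fac_neum}, applies) falls directly out of Lemma \ref{Lem_abl} after compactness: the finite conjunction $\bigwedge_i\phi(x,g_i)$ cuts out a definable group containing $H$ in which any $n+1$ elements include two that are congruent modulo the centralizer of any element satisfying the same conjunction. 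Similarly, part (3) is not a layered application of part (1) along the lower central series: the sections $\gamma_i(H)/\gamma_{i+1}(H)$ are not definable, so one cannot quotient by them; the paper instead uses Lemma \ref{Cor_nil}, Schlichting's theorem (Fact \ref{fac_schl}) to replace $A$ by a commensurable subgroup normal in $K$, symmetry of the almost centralizer and Fact \ref{Fac_[H,K]fin} to make $[A,K]$ finite, and then takes $N^\ast=C_{K'}([A',K'])$, $N_1=Z(N^\ast)$, $N_2=Z_2(N^\ast)$ and applies the induction hypothesis to $(H\cap N^\ast)N_2/N_2$ inside $N^\ast/N_2$, which is where the class bound $2n$ really comes from. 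Your appeal to ``a finiteness argument showing the finite noise is absorbed'' is precisely the content that is missing. The solvable case you outline is closer to the paper (induction on derived length via the normal abelian case, then Fact \ref{lem_FCsol2}), but it depends on the abelian case being proved correctly first.
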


In the abelian and solvable case we follow some of the ideas already present in the proof of de Aldama. Similar to his proof and unlike the proof of Milliet in simple theories, we do not rely on a chain condition for uniformly definable subgroups, but we look to prove the result directly from the non existence of the array described in Definition \ref{NTP2}. In the nilpotent case, we use additionally some properties of the almost centralizer (see Definition \ref{def_AlmCen}) needed to prove the same result in groups which satisfy the chain condition on centralizers up to finite index presented in \cite{Na}.

The following is the key lemma for the abelian case and it is used as well in the nilpotent case.

\begin{lemma}\label{Lem_abl}
Let $G$ be a group with an NTP$_2$ theory and let $H$ be a subgroup of $G$. Fix $\G$ an $|H|^+$-saturated extension of $G$ and let $\phi (x,y)$ be the formula $x \in C_\G(y)$. Consider the following partial types:
$$\pi_{Z(H)}(x) =\{ \phi(x,g): Z(H) \leq \phi(x,g),\ g \in \G\}$$
$$\pi_H(x) =\{ \phi(x,g): H \leq \phi(x,g),\ g \in \G\}.$$
Then there exists a natural numbers $n$ such that
$$ \pi_{Z(H)}(x_0) \cup \dots \cup \pi_{Z(H)}(x_n) \cup \pi_H(y) \vdash \bigvee_{i \neq j} x_i^{-1} x_j \in C_{\G} (y).$$
\end{lemma}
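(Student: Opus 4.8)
First I would assume for contradiction that no such $n$ exists. This means that for every natural number $n$, the type
\[
\pi_{Z(H)}(x_0)\cup\dots\cup\pi_{Z(H)}(x_n)\cup\pi_H(y)\cup\bigl\{x_i^{-1}x_j\notin C_\G(y): i\neq j\bigr\}
\]
is consistent. Realizing it in a sufficiently saturated model gives, for each $n$, elements $c_0,\dots,c_n$ each centralizing $Z(H)$, and an element $b$ centralizing $H$, such that $c_i^{-1}c_j\notin C_\G(b)$ for all $i\neq j$. The plan is to manufacture from this data an array witnessing $\mathrm{TP}_2$ for the formula $\psi(x;y,z)$ saying $[x,y]=z$ (or a closely related formula involving the group operation and the centralizer relation), contradicting that the theory is NTP$_2$.

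**Building the array.**

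The natural candidates for the rows of the array are translates of $Z(H)$ by the $c_i$, i.e. the cosets $c_i Z(H)$, and the columns should be indexed by elements of $H$ together with the commutator values they produce. More precisely, fix a long indiscernible-like sequence of the relevant configuration (using the Erd\H{o}s--Rado fact quoted earlier, after taking $n$ large and passing to an indiscernible array over the relevant parameters). For the inconsistency of rows: since each $c_i$ centralizes $Z(H)$, any element of the coset $c_i Z(H)$ has the same commutator with a fixed $h\in H$; combined with the fact that $b$ centralizes $H$ but $c_i^{-1}c_j\notin C_\G(b)$, one gets that a witness $x$ satisfying the formula for entries coming from two different cosets $c_iZ(H)$ and $c_jZ(H)$ would have to satisfy incompatible commutator equations, yielding $k$-inconsistency of each row (with $k=2$). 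For the consistency of paths: along any single coset $c_iZ(H)$ the commutator with a fixed element is constant, so $x=b$ (or a suitable conjugate/translate) simultaneously satisfies one chosen entry from each row; the key point is that the choice along the $i$-th row can be made independently because the cosets $c_iZ(H)$ are genuinely distinct modulo $C_\G(b)$.

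**Making the indiscernibility/path-independence work.**

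The main obstacle — and where I would spend most of the effort — is arranging the array so that the \emph{consistency along every path} holds, not just along the constant path. This requires that for any function $f$ choosing a column in each row, there is a single element satisfying all the chosen instances simultaneously; in the raw data we only have one $b$ working for the diagonal. The standard fix is to first extract, via compactness and Erd\H{o}s--Rado, a mutually indiscernible array of copies of the configuration $(c_i, \text{stuff})$ with $b$ attached, so that the rows become interchangeable and one can transport a path-witness from the diagonal to an arbitrary path by an automorphism; indiscernibility of the rows (mutual indiscernibility) is precisely what converts "works on the diagonal" into "works on every path". One must check that the formula $\psi$ can be taken uniform across the array — this is where the fact that $\phi(x,y)$ is $x\in C_\G(y)$ and that $Z(H)\leq \phi(x,c_i)$ is crucial, since it makes the relevant commutator values depend only on the coset, hence are genuinely constant data that can be absorbed into the column parameters. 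Once the mutually indiscernible array with $k$-inconsistent rows and consistent paths is in hand, Definition \ref{NTP2} is contradicted, completing the proof.

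I should also remark that the statement as phrased is essentially a compactness reformulation: once one knows that the "bad" array cannot exist for arbitrarily large $n$, the bound $n$ for which the displayed implication holds is extracted by compactness from the inconsistency of the infinite type, so the proof really reduces to the combinatorial extraction of the $\mathrm{TP}_2$ pattern described above.
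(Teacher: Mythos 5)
There is a genuine gap, and it is precisely at the point you identify as the main obstacle. First, your reading of the two types is off in a way that matters: realizing $\pi_{Z(H)}$ is much stronger than ``centralizing $Z(H)$'' --- it means lying in $C_\G(g)$ for \emph{every} $g\in\G$ whose centralizer contains $Z(H)$, so in particular the realizations centralize all of $H$; dually, realizing $\pi_H$ means lying in $C_\G(g)$ for every $g\in C_\G(H)$, which does \emph{not} imply centralizing $H$ (it implies centralizing $Z(H)$ and, crucially, centralizing previously chosen realizations of $\pi_{Z(H)}$). You have the two roles essentially swapped, and the paper's argument lives exactly in this extra strength: it chooses the tuples $(\bar a_l,b_l)$ so that each one realizes the types \emph{restricted to $H$ together with all previously chosen tuples}, which forces the cross-tuple commutation relations $a_{l,i}\in C_\G(b_k)$ for $k\neq l$ and $a_{l,i}\in C_\G(b_k^{a_{k,j}})$ for $k<l$. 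Those relations are what produce an explicit witness (a suitable product of the chosen coset representatives) lying in $a_{0,i_0}C_\G(b_0)\cap\dots\cap a_{m,i_m}C_\G(b_m)$ for \emph{every} choice of indices, i.e.\ path consistency for the formula $\psi(x;y,z)\equiv x\in yC_\G(z)$, while $2$-inconsistency of each row is immediate since the entries of a row are distinct cosets of $C_\G(b_l)$.

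Your proposed substitute --- extract a mutually indiscernible array and transport a ``diagonal'' witness by an automorphism --- does not close this gap. Mutual indiscernibility does upgrade ``one path consistent'' to ``all paths consistent'', but you never exhibit any consistent path to start from: a single realization $(c_0,\dots,c_n,b)$ of the negated statement only yields one row's worth of pairwise-inconsistent instances, and $b$ is not a witness meeting one chosen coset from each of infinitely many independently chosen rows. Moreover, extracting mutually indiscernible rows (via Erd\H{o}s--Rado/compactness) only preserves finite patterns already realized in the original array, so it cannot create path consistency that was never verified there; the consistency must be built into the construction of the rows, which is exactly what the inductive realization of the restricted types accomplishes and what your outline omits. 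Finally, the concrete array you sketch does not give the claimed inconsistency: elements of a coset $c_iZ(H)$ need not have the \emph{same} commutator with a fixed $h\in H$ (one only gets $Z(H)$-conjugate commutators), so the formula $[x,y]=z$ with cosets of $Z(H)$ as rows does not obviously produce a $k$-inconsistent row, whereas cosets of $C_\G(b_l)$ do so trivially.
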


\proof
Suppose that the lemma is false. Then for arbitrary large $n \in \mathbb N$ one can find a sequence of elements $(a_{l,0}, \dots , a_{l,n-1}, b_l)_{l < \omega}$ such that
$$ (\bar a_l , b_l) \models  \pi_{Z(H)}(x_0) \cup \dots \cup \pi_{Z(H)}(x_{n-1}) \cup \pi_H(y) \upharpoonright H \cup \{  \bar a_k , b_k: k < l\}$$
and for all $0\leq i < j < n$ we have that $a_{l,i}^{-1} a_{l,j} \not\in C_{\G} (b_l)$. We show that:
\begin{enumerate}
\item For all $i < n$ and all natural numbers $k$ different than $l$, we have that $a_{l,i} \in C_\G(b_k)$;
\item For all $i, j < n$ and all $k < l< \omega$ we have that $a_{l,i} \in C_\G(b_k^{a_{k,j}})$.
\end{enumerate}

To do so, we let $k<l < \omega$ and $i,j < n$ be arbitrary  and we prove that $a_{l,i} \in C_\G(b_k)$ as well as $a_{k,i} \in C_\G(b_l)$ and  $a_{l,i} \in C_\G(b_k^{a_{k,j}})$.

Let $z$ be an element of $Z(H)$. Hence $H$ is a subgroup of $C_{\G} (z)$ and whence  $\phi(x,z) \in \pi_{H}(x)\upharpoonright H$. As $b_k$ satisfies this partial type, we obtain that
$$Z(H) \leq C_\G(b_k).$$
So $\phi(x,b_k)$ belongs to $\pi_{Z(H)}(x) \upharpoonright \{b_k\}$. Since the element $a_{l,i}$ satisfies $\pi(x)_{Z(H)} \upharpoonright H \cup \{b_k\}$, we get that $a_{l,i}$ belongs to $ C_\G(b_k)$.

On the other hand, if we take $a \in H$ we have that $Z(H)$ is a a subgroup of $C_{\G} (a)$ and thus  $\phi(x,a) \in \pi_{Z(H)}(x)\upharpoonright H$. As $a_{k,i}$ satisfy this partial type, we obtain that
$$H \leq C_\G(a_{k,i}).$$
So $\phi(x,a_{k,i}) \in \pi_{H}(x) \upharpoonright \{a_{k,i}\}$. As the element $b_l$ satisfies this partial type $\pi(x)_H \upharpoonright H \cup \{a_{k,i}\}$, we get that  the element $a_{k,i}$ belongs to $C_\G(b_l)$ which together with the previous paragraph yields (1).

As seen before, we have that $Z(H) \leq C_\G(b_k)$ and $H \leq C_\G(a_{k,i})$. This yields that  $Z(H) \leq C_\G(b_k^{a_{k,j}})$. Hence  $\phi(x,b_k^{a_{k,j}})$ belongs to $\pi_{Z(H)}(x) \upharpoonright \{b_k, a_{k,j}\}$. Since the element $a_{l,i}$ satisfies $\pi(x)_{Z(H)} \upharpoonright H \cup \{b_k, a_{k,j}\}$, we obtain that $a_{l,i}$ belongs  to $ C_\G(b_k^{a_{k,j}})$ which yields (2).

Let $\psi (x, y, z)$ be the formula that defines the coset of $y\cdot C_\G(z)$. We claim that the following holds:
\begin{itemize}
\item $\{ \psi(x, a_{l, i} ,b_l\} : i < n\}$ is $2$-inconsistent for any $l\in \omega$;
\item $\{ \psi(x, a_{l, f(l)} ,b_l\} : l \in \omega\}$ is consistent for any function $f: \omega \rightarrow n+1$.
\end{itemize}
The first family is $2$-inconsistent as every formula defines a different coset of $C_\G(b_i)$ in $\G$. For the second we have to show that for all natural numbers $m$ and all tuples $(i_0, \dots, i_m) \in n^m$ the intersection $$ a_{0,{i_0}} C_\G(b_0) \cap \dots \cap a_{m,{i_m}} C_\G(b_m)$$ is nonempty. Using (1) and (2) and multiplying by $ a_{0, i_0} \cdot \dots \cdot a_{m, i_m}$ on the right, it is equivalent to $  C_\G(b_0^{a_{0,{i_0}}}) \cap \dots \cap C_\G(b_m^{a_{m,{i_m}}})$ being nonempty which is trivial true.

Compactness yields a contradiction to the fact that the group $G$ has an NTP$_2$ theory and we obtain the result.
\qed

\subsection{Abelian subgroups}


\proof[Proof of Theorem \ref{thm_main}(1)]
Since $H$ is abelian, it is equal to its center. So by Lemma \ref{Lem_abl} and compactness one can find a finite conjunction $\bigwedge_{i} \phi(x,g_i)$ with $\phi (x,y)$ being the formula $x \in C_\G(y)$ and $g_i$ in some saturated extension of $G$, such that
$$\bigwedge_{i} \phi(x_0,g_i) \wedge \dots \wedge \bigwedge_{i} \phi(x_n,g_i) \wedge \bigwedge_{i} \phi(y,g_i) \vdash x_i^{-1} x_j \in C_{\G} (y). \ \ \ (\ast)$$
Furthermore, all $h$ in $H$ satisfies $\bigwedge_{i} \phi(x,g_i)$.
Hence the subgroup $ \bigcap_i C_G(g_i)$ contains $H$ and by ($\ast$), it is a bounded almost abelian group. Thus its commutator subgroup is finite by Fact \ref{fac_neum}, which yields Theorem \ref{thm_main}$(1)$. Moreover, if $H$ is normal in $G$, the group $ \bigcap_i (C_G(g_i)^G)$ is a definable normal subgroup of $G$ which still contains $H$ and which is as well finite-by-abelian, which completes the proof.
\qed

\subsection{Solvable subgroups}

To prove the solvable case of Theorem \ref{thm_main} we need the following.
\begin{defn}
A group $G$ is \emph{almost solvable} if there exists a normal \emph{almost series} of finite length, i.\,e.\ a finite sequence of normal subgroups
$$\{1\} = G_0 \trianglelefteq G_1 \trianglelefteq \dots\trianglelefteq G_n = G$$
of $G$ such that $G_{i+1} /G_{i}$ is an almost abelian group for all $i \in n$. The least such natural number $n \in \omega$ is called the \emph{almost solvable class} of $G$.
\end{defn}

\begin{defn}
Let $G$ be a group and $S$ be a definable almost solvable subgroup of class $n$. We say that $S$ \emph{admits a definable almost series} if there exists a family of definable normal subgroups $\{S_i: i \leq n\}$ of $S$ such that $S_0$ is the trivial group, $S_n$ is equal to $S$ and $S_{i+1} / S_i$ is almost abelian.
\end{defn}

The proof of Corollary 4.10 in \cite{Mi} provides the following fact. It can also been found as Lemma 3.22 in \cite{Na}.

\begin{fact}\label{lem_FCsol2}
Let $G$ be a definable almost solvable subgroup of class $n$ which admits a definable almost series. Then $G$ has a definable subgroup of finite index which is solvable of class at most $2n$. 
\end{fact}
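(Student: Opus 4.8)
\emph{Approach.} I would argue by induction on the almost solvable class $n$, first reducing to a statement about definable series whose consecutive quotients are \emph{alternately finite and abelian}, and then peeling off two layers at a time while keeping everything definable.

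\emph{Step 1: refining the almost series.} Fix a definable almost series $\{1\}=G_0\trianglelefteq\dots\trianglelefteq G_n=G$ with each $G_{i+1}/G_i$ almost abelian. Each $G_{i+1}/G_i$ is definable, and --- being bounded almost abelian in the situations where the fact gets applied, or in general after shrinking, via the chain condition on centralizers up to finite index, to a suitable definable finite-index subgroup --- has finite derived subgroup by Fact~\ref{fac_neum}. Its preimage $H_i$ in $G_{i+1}$ is then a definable normal subgroup of $G_{i+1}$, namely a finite union of cosets of the definable group $G_i$, with $H_i/G_i$ finite and $G_{i+1}/H_i$ abelian. Interleaving these gives a definable series
\[\{1\}=G_0\trianglelefteq H_0\trianglelefteq G_1\trianglelefteq H_1\trianglelefteq\dots\trianglelefteq G_{n-1}\trianglelefteq H_{n-1}\trianglelefteq G_n=G\]
of length $2n$. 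Hence it suffices to prove: if a definable group $A$ admits a definable normal series of length $2n$ whose quotients are alternately finite and abelian, then $A$ has a definable finite-index subgroup that is solvable of class at most $2n$.

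\emph{Step 2: the induction.} The engine is the remark that a definable finite-by-abelian group $B$ has a definable finite-index subgroup $C$ with $C^{(2)}=\{1\}$: indeed $B$ is bounded almost abelian, so $[B,B]$ is finite; then $C:=C_B([B,B])$ is the kernel of the conjugation action of $B$ on the finite group $[B,B]$, hence of finite index, and it is definable as an intersection of finitely many centralizers; and since every element of $C$ centralizes $[B,B]\supseteq[C,C]$, we get $[C,C]\leq Z(C)$, so $C$ is nilpotent --- in particular solvable --- of class at most $2$. Now induct on $n$, the case $n=0$ being trivial. Given $A$ with series $\{1\}=A_0\trianglelefteq\dots\trianglelefteq A_{2n}=A$ as above, apply the induction hypothesis to $A_{2n-2}$ to obtain a definable finite-index $B\leq A_{2n-2}$ with $B^{(2n-2)}=\{1\}$; replacing $B$ by the kernel of the action of $A$ on the finite coset space $A/B$ --- a finite intersection of conjugates of $B$, hence definable, of finite index, contained in $B$ and normal in $A$ --- we may assume $B\trianglelefteq A$. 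Then $A/B$ carries the definable series $B/B\trianglelefteq A_{2n-2}/B\trianglelefteq A_{2n-1}/B\trianglelefteq A/B$ with the first two quotients finite and the last abelian, so $A/B$ is finite-by-abelian; taking $C\leq A/B$ as in the remark and letting $K$ be its preimage in $A$, we get a definable finite-index subgroup $K$ of $A$ with $K^{(2)}\leq B$, whence $K^{(2n)}=(K^{(2)})^{(2n-2)}\leq B^{(2n-2)}=\{1\}$. Thus $K$ is solvable of class at most $2n$, completing the induction.

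\emph{The main obstacle.} The proof is mostly bookkeeping; the one load-bearing issue is that every group obtained by ``passing to finite index'' must be cut out by a formula, and this works precisely because the derived subgroups that occur --- of $G_{i+1}/G_i$ in Step~1, of $B$ in the remark --- are \emph{finite}, hence trivially definable. Producing that finiteness is exactly what Neumann's theorem (Fact~\ref{fac_neum}) does for bounded almost abelian groups; for genuinely almost abelian quotients one must instead run the argument inside definable finite-index subgroups supplied by a chain condition on centralizers up to finite index, which is why such a hypothesis is imposed in the ambient settings (simple theories, or groups with that chain condition). The only remaining delicacy is the constant: the remark delivers $C^{(2)}=\{1\}$, so each ``finite-then-abelian'' pair of layers costs exactly $2$ in derived length, giving the bound $2n$ rather than something larger.
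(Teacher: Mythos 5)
The load-bearing flaw is the normalization step in your induction: your $B$ has finite index in $A_{2n-2}$, \emph{not} in $A$, so the coset space $A/B$ is in general infinite, the kernel of the $A$-action on it is not a finite intersection of conjugates of $B$, and it need not have finite index anywhere useful. Worse, the statement you actually need --- that some definable finite-index subgroup of $A_{2n-2}$ contained in $B$ is normal in $A$ --- is false for general groups: take $N=\bigoplus_{i\in\mathbb{Z}}\mathbb{Z}/2\mathbb{Z}$, $A=N\rtimes\mathbb{Z}$ with $\mathbb{Z}$ acting by shift, and $B=\{f\in N: f(0)=0\}$; then $B$ has index $2$ in the normal subgroup $N$, but the intersection of its $A$-conjugates is trivial, so no finite-index subgroup of $N$ inside $B$ is normal in $A$. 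Since forming the quotient $A/B$ and declaring it finite-by-abelian is exactly what powers your induction step, the argument does not go through as written. Schlichting's theorem (Fact~\ref{fac_schl}) does not repair it: it yields a definable $A$-invariant group \emph{commensurable} with $B$, not contained in it, so solvability of class $2n-2$ is lost. A second, smaller wobble: to apply Neumann (Fact~\ref{fac_neum}) you need the quotients to be \emph{bounded} almost abelian, and the correct justification is compactness/saturation (for a definable group in a sufficiently saturated model, unbounded indices of centralizers would yield an element with infinite-index centralizer), not a ``chain condition on centralizers up to finite index'', which is neither assumed in the statement nor available in the NTP$_2$ setting of this paper.

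The way to avoid the obstruction --- and this is what the argument behind the cited Fact does --- is to keep every finite-index pass at the level of the whole group $G$: since the terms of an almost series are normal in $G$, each finite group $[G_{i+1},G_{i+1}]G_i/G_i$ is normalized by $G$, so its elements have finitely many $G$-conjugates and its centralizer modulo $G_i$ has finite index \emph{in $G$}, not merely in $G_{i+1}$. One can then either run your induction from the bottom (apply the induction hypothesis to $G/G_1$ to get a definable finite-index $D\leq G$ with $D^{(2n-2)}\leq G_1$, and set $E=D\cap C_G([G_1,G_1])$, so that $E^{(2n-1)}\leq [G_1,G_1]$ is centralized by $E$ and hence $E^{(2n)}=1$), or argue globally: the intersection $G^0$ of all these finite-index centralizers satisfies $[[G_{i+1},G_{i+1}],G^0]\leq G_i$, whence $(G^0)^{(2n)}=1$, and compactness extracts a single definable finite-index subgroup of $G$ that is solvable of class at most $2n$. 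Your Step 1 refinement and the class-$2$ remark are fine; it is only the passage from finite index in $A_{2n-2}$ to normality in $A$ that has no proof and, in general, no truth.
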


So we only need to concentrate on building a definable almost series.

\begin{prop}\label{Prop_AlmSol}
Let $G$ be a group definable in an NTP$_2$ theory and $H$ be a normal solvable subgroup of $G$ of class $n$. Then there exists a definable normal almost solvable subgroup $S$ of class $n$ containing $H$. Additionally, $S$ admits a definable almost series such that all of its members are normal in $G$.
\end{prop}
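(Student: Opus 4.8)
The plan is to induct on the solvable class $n$ of $H$. For $n=1$, $H$ is abelian and normal in $G$; Theorem \ref{thm_main}(1) already produces a definable normal finite-by-abelian (hence almost abelian) subgroup $S$ of $G$ containing $H$, and $\{1\}\trianglelefteq S$ is a definable almost series with members normal in $G$. For the inductive step, assume the result for normal solvable subgroups of class $n-1$. Given $H$ normal solvable of class $n$, consider its derived subgroup $H'=[H,H]$, which is normal in $G$ (being characteristic in the normal subgroup $H$) and solvable of class $n-1$. By the inductive hypothesis there is a definable normal almost solvable subgroup $T$ of $G$ of class $n-1$ containing $H'$, admitting a definable almost series $\{1\}=T_0\trianglelefteq\cdots\trianglelefteq T_{n-1}=T$ with all $T_i$ normal in $G$.

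Next I would produce, on top of $T$, one more almost-abelian layer corresponding to the abelian quotient $H/H'$. The natural candidate is an intersection of centralizers modulo $T$: working in the quotient-type setting, apply Lemma \ref{Lem_abl} (or rather its proof adapted to the group $H/(H\cap T)$, which is abelian since $H'\le T$) to obtain, via compactness, finitely many parameters $g_1,\dots,g_m$ in a saturated extension such that the definable group $R:=\bigcap_j C_\G(g_j/T)$ — i.e.\ the preimage in $\G$ of $\bigcap_j C_{\G/T}(\bar g_j)$ — contains $H$ and is such that $R/T$ is bounded almost abelian, hence (by Fact \ref{fac_neum}) finite-by-abelian. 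To keep normality in $G$, replace $R$ by the definable subgroup $S:=\bigcap_j\big(C_\G(g_j/T)^G\big)$; as in the abelian case this is normal in $G$, still contains $H$, and $S/T$ is still almost abelian. Then $\{1\}=T_0\trianglelefteq\cdots\trianglelefteq T_{n-1}=T\trianglelefteq S$ is a definable almost series for $S$ all of whose members are normal in $G$, so $S$ is a definable normal almost solvable subgroup of $G$ of class $n$ containing $H$, as required.

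The step I expect to be the main obstacle is the careful execution of the "one more layer" argument in the quotient $\G/T$: one must check that Lemma \ref{Lem_abl} really does apply to $H/(H\cap T)$ inside the interpretable group $\G/T$ (which has an NTP$_2$ theory), that the resulting intersection of centralizers-modulo-$T$ is genuinely definable in $G$ and contains all of $H$ (using $H'\le T$ so that conjugation by $H$ acts trivially modulo $T$), and that passing to the $G$-invariant version $C_\G(g_j/T)^G$ preserves both containment of $H$ and the almost-abelian property of the quotient by $T$. A secondary subtlety is bookkeeping the saturation hypothesis: $\G$ is only $|H|^+$-saturated, and the inductive hypothesis applied to $H'$ needs $|H'|^+$-saturation, which is fine since $|H'|\le|H|$; one should also confirm the finitely many parameters $g_j$ can be found in $\G$ itself after the compactness argument, or else pass to a bigger extension only for the intermediate type-computation and then pull the finite conjunction back down to $G$.
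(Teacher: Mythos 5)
Your proposal is correct and is essentially the paper's argument with the induction run in the mirror direction: the paper applies the abelian case (Theorem \ref{thm_main}(1)) to the last derived term $H^{(n-1)}$ inside $G$ and then invokes the induction hypothesis for $HA/A$ in the interpretable NTP$_2$ quotient $G/A$, whereas you invoke the induction hypothesis for $H'$ inside $G$ and then add the abelian layer for $HT/T$ in $G/T$ --- unwound, the two constructions coincide. The ``main obstacle'' you anticipate disappears if, instead of rerunning Lemma \ref{Lem_abl} modulo $T$ by hand, you simply cite Theorem \ref{thm_main}(1) for the interpretable (hence NTP$_2$) group $G/T$ and its normal abelian subgroup $HT/T$ and take preimages of the resulting definable normal finite-by-abelian subgroup, which is exactly how the paper black-boxes the abelian case.
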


\proof
We prove  this by induction on the derived length $n$ of $H$. If $n$ is equal to $1$ this is a consequence of the abelian case, Theorem \ref{thm_main}$(1)$. So let $n>1$, and consider the abelian subgroup $H^{(n-1)}$ of $H$. It is a characteristic subgroup of $H$ and hence, as $H$ is normal in $G$, it is normal in $G$ as well. So again by the abelian case, there exists a definable almost abelian normal subgroup $A$ of $G$ which contains $H^{(n-1)}$. Replacing $G$ by $G/A$, we have that the derived length of $HA/A$ is at most $n-1$ and we may apply the induction hypothesis which finishes the proof.
\qed

\begin{proof}[Proof of Theorem \ref{thm_main}(2)]
Applying Proposition \ref{Prop_AlmSol} to $H$ gives us a definable almost solvable group $K$ of class $n$ containing $H$ and which admits a definable almost series. By Fact \ref{lem_FCsol2}, the group $K$ has a definable subgroup $S$ of finite index which is solvable of class at most $2n$. 
\end{proof}

\subsection{Nilpotent subgroups}

The following follows from Lemma \ref{Lem_abl}
\begin{lemma}\label{Cor_nil}
Let $G$ be a group definable in an NTP$_2$ theory, let $H$ be a subgroup of $G$ and suppose that $G$ is $|H|^+$ saturated. Then one can find definable subgroups $A$ and $K$ and a natural number $m$ such that
\begin{itemize}
\item  the cardinality of the conjugacy class $k^A$ for all elements $k$ in $K$ is bounded by $m$.
\item $A$ is almost abelian and contains $Z(H)$;
\item $K$ contains $H$ and $A$.
\end{itemize}
If $H$ is additionally normal in $G$, one can choose $A$ and $K$ to be normal in $G$ as well.
\end{lemma}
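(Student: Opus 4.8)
The idea is to extract $A$ and $K$ directly from the finite conjunctions of centralizer formulas produced by Lemma \ref{Lem_abl}, exactly as in the proof of Theorem \ref{thm_main}(1), but keeping track of both the $Z(H)$-side and the $H$-side. First I would apply Lemma \ref{Lem_abl} to get a natural number $n$ with
$$\pi_{Z(H)}(x_0)\cup\dots\cup\pi_{Z(H)}(x_n)\cup\pi_H(y)\vdash\bigvee_{i\neq j}x_i^{-1}x_j\in C_\G(y),$$
and then by compactness find a single finite conjunction $\theta(x):=\bigwedge_{i}\phi(x,g_i)$ (with $\phi(x,y)$ the formula $x\in C_\G(y)$ and the $g_i$ in $\G$) such that $\theta(x)\in\pi_{Z(H)}$ and $\theta(x)\in\pi_H$ — i.e. every $g_i$ is centralized by all of $H$, hence $H\le\bigcap_i C_\G(g_i)$ — and such that
$$\theta(x_0)\wedge\dots\wedge\theta(x_n)\wedge\theta(y)\vdash\bigvee_{i\neq j}x_i^{-1}x_j\in C_\G(y).\qquad(\ast)$$
(One has to be mildly careful: $\pi_{Z(H)}$ and $\pi_H$ are different types, but a formula $\phi(x,g)$ lying in $\pi_H$ automatically lies in $\pi_{Z(H)}$ since $Z(H)\le H$; so take the conjunction of the finitely many $\phi(x,g_i)$ coming from the $\pi_H$-side used to realize the type in $(\ast)$, which then works on both sides.)

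Next I would set $K:=\bigcap_i C_G(g_i)$, which is definable and contains $H$, and I would take $A$ to be a suitable definable subgroup of $K$ that is almost abelian and contains $Z(H)$. The natural candidate is $A:=C_K(k_1)\cap\dots\cap C_K(k_n)$ for an appropriate choice of $k_1,\dots,k_n\in K$, but the cleaner route is: since every $h\in H$ satisfies $\theta$, and $Z(H)\le H\le K$, the group $K$ itself satisfies $(\ast)$ in the sense that any $n+2$ elements of $K$ (which all satisfy $\theta$) verify $\bigvee_{i\neq j}x_i^{-1}x_j\in C_\G(y)$. From $(\ast)$ with all entries in $K$ one reads off that $K$ is "$n$-almost abelian": for each $y\in K$, among any $n+1$ cosets of $C_K(y)$ two coincide, so $[K:C_K(y)]\le n$. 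Thus $K$ is a bounded almost abelian group, and by Fact \ref{fac_neum} its derived group is finite. Now take $A:=C_K([K,K])$ — this is definable, contains $Z(H)$ (as $Z(H)$ centralizes $H$ hence $[H,H]\le[K,K]$... one needs $Z(H)$ to centralize $[K,K]$, which holds because $[K,K]$ is finite and normal in $K$ so $C_K([K,K])$ has finite index, but containing $Z(H)$ is not automatic). The correct and simplest choice is instead $A:=\bigcap_i C_K(g_i)=K$ is too big to be abelian; so rather, mimic Theorem \ref{thm_main}(1) directly: the conclusion there is that $\bigcap_i C_G(g_i)$ is bounded almost abelian. So we should simply put $K:=\bigcap_i C_G(g_i)$ and $A:=K$ — but we want $A$ almost abelian \emph{and} the conjugacy classes $k^A$ bounded, which for $A=K$ means $K$ almost abelian with bounded classes, i.e. exactly "$K$ bounded almost abelian", which $(\ast)$ gives. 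Then $A=K$ works, $Z(H)\le H\le A=K$, and the first bullet ($|k^A|\le m$) is precisely the bound $[K:C_K(k)]\le n=:m$ that $(\ast)$ yields. So in fact $A=K$ and the three bullets hold simultaneously.

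For the normal case, if $H\trianglelefteq G$ then as in Theorem \ref{thm_main}(1) replace each $C_G(g_i)$ by its normal core $C_G(g_i)^G=\bigcap_{g\in G}C_G(g_i)^g$; the intersection $\bigcap_i C_G(g_i)^G$ is a definable normal subgroup of $G$, still contains $H$ (since $H$ is normal and each $C_G(g_i)\supseteq H$ implies $C_G(g_i)^g\supseteq H^g=H$), and is still bounded almost abelian because it is a subgroup of $\bigcap_i C_G(g_i)$ and $(\ast)$ passes to subgroups. Set $A=K$ equal to this normal core. The main obstacle is the bookkeeping of the two types $\pi_{Z(H)}$ and $\pi_H$: one must check that the finitely many parameters $g_i$ can be chosen to simultaneously witness membership in $\pi_H$ (so that $H\le K$) and to make $(\ast)$ hold; this is exactly where the asymmetric formulation of Lemma \ref{Lem_abl} is used, and it is the only non-mechanical point. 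Everything else — passing from the implication $(\ast)$ to the index bound $[K:C_K(k)]\le n$, invoking Fact \ref{fac_neum}, and taking normal cores — is routine.
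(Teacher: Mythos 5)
There is a genuine gap, and it sits exactly at the point you flag as ``the only non-mechanical point''. You propose to take a single finite conjunction $\theta$ of formulas from $\pi_H$ and use it on \emph{all} variables in $(\ast)$, arguing that a formula in $\pi_H$ automatically lies in $\pi_{Z(H)}$. That inclusion is true but works against you: since $\pi_H\subseteq\pi_{Z(H)}$ as sets of formulas, $\pi_{Z(H)}$ is the \emph{stronger} type, and the conjunction $\phi_{Z(H)}$ produced by compactness from Lemma \ref{Lem_abl} will in general involve formulas $\phi(x,g)$ with $Z(H)\le C_\G(g)$ but $H\not\le C_\G(g)$ (indeed the proof of Lemma \ref{Lem_abl} uses $\phi(x,a)$ for arbitrary $a\in H$, which lie in $\pi_{Z(H)}$ but not in $\pi_H$ unless $a\in Z(H)$). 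So replacing $\phi_{Z(H)}$ by a conjunction from $\pi_H$ weakens the hypothesis of $(\ast)$, and the weakened implication neither follows from Lemma \ref{Lem_abl} nor is true: it would say that $K=\phi_H(G)\supseteq H$ is bounded almost abelian, hence by Fact \ref{fac_neum} finite-by-abelian. Taking for $H$ a nilpotent group of class $2$ with infinite derived subgroup (e.g.\ the Heisenberg group over $\mathbb{Q}$, definable in a stable, hence NTP$_2$, theory) shows this is impossible, since a group with finite derived subgroup cannot contain such an $H$. This is precisely why the lemma is stated with two groups: you cannot have $A=K$.

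The paper's proof keeps the asymmetry your reduction erases. By compactness it fixes $\phi_{Z(H)}$ and $\phi_H$, finite conjunctions from $\pi_{Z(H)}$ and $\pi_H$ respectively, with
$$\phi_{Z(H)}(x_0)\wedge\dots\wedge\phi_{Z(H)}(x_m)\wedge\phi_H(y)\vdash\bigvee_{i\neq j}x_i^{-1}x_j\in C_\G(y),$$
and then sets $K:=\phi_H(G)$ (an intersection of centralizers containing $H$) and $A:=\phi_{Z(H)}(G)\cap\phi_H(G)$ (containing $Z(H)$, since elements of $Z(H)$ realize both types). The displayed implication then gives $[A:C_A(k)]\le m$ for every $k\in K$, because the $x_i$ range over $A$ while $y$ ranges over $K$; this yields both the bound on $|k^A|$ and, since $A\le K$, that $A$ is almost abelian, while $K$ itself is only required to contain $H$ and $A$ and need not be almost abelian. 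Your treatment of the normal case (intersecting the conjugates of $A$ and of $K$) does match the paper, but it rests on the earlier construction, so the argument as proposed does not go through without restoring the two distinct conjunctions and the two distinct groups.
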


\proof
By Lemma \ref{Lem_abl}  we can find $\phi_{Z(H)}$ and $\phi_H$ which are conjunctions of formulas from $\pi_{Z(H)}(x)$ and $\pi_H(x)$ (defined as in Lemma \ref{Lem_abl}) respectively and a natural number $m$ such that:
$$ \phi_{Z(H)}(x_0) \wedge \dots \wedge  \phi_{Z(H)}(x_m) \wedge  \phi_H(y) \vdash \bigvee_{i \neq j} x_i^{-1} x_j \in C_{\G} (y).$$
Note that these formulas define intersections of centralizers and are therefore subgroups of $G$. Letting $A$ be equal to  $\phi_{Z(H)}(G) \cap \phi_H (G)$ and $K$ be equal to $\phi_H(G)$ we have the announced properties.

If $H$ is normal in $G$, we have that $Z(H)$ is also normal in $G$ and we can replace $A$ and $K$ by $\cap_{g\in G} A^g$ and $\cap_{g\in G} K^g$ which are normal definable subgroups of $G$ and still satisfy the given properties.
\qed

To prove the existence of ``definable envelopes'' of nilpotent subgroups of a group definable in an NTP$_2$ theory we need to define the almost-centralizer.

\begin{defn}\label{def_AlmCen}
Let $G$ be a group an $H$ be a definable subgroup  of $G$.
 We define the almost-centralizer $\widetilde{C}_G(H)$ of $H$ in $G$  to be
\[\widetilde{C}_G(H):=\{g\in G \mid [H : C_H(g)] < \infty \}.\]
\end{defn}

We will need the following results, which are Corollary 2.11 and Proposition 3.27 in \cite{Na}.

\begin{fact}\label{Fac_sym} \textbf{(Symmetry)}
Let $G$ be a group and let $H$ and $K$ be two definable subgroups of $G$. So
$$ H \ls \t C_G(K) \mbox{ if and only if } K \ls \t C_G(H).$$
\end{fact}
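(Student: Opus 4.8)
The plan is to establish one implication, say $H \ls \t C_G(K) \Rightarrow K \ls \t C_G(H)$, since the statement is symmetric in $H$ and $K$. I would begin with the elementary remark that $\t C_G(K)$ is in fact a subgroup of $G$: for $g_1,g_2 \in \t C_G(K)$ one has $C_K(g_1g_2) \supseteq C_K(g_1)\cap C_K(g_2)$, which still has finite index in $K$, and $C_K(g^{-1})=C_K(g)$. Consequently the hypothesis $H\ls \t C_G(K)$ says precisely that the subgroup $H_0 := H\cap \t C_G(K)$ has finite index in $H$, and by definition $[K:C_K(h)]<\infty$ for every $h\in H_0$.

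The crux is to deduce that the (a priori infinite) intersection $C_K(H_0)=\bigcap_{h\in H_0}C_K(h)$ still has finite index in $K$. This is exactly where the hypothesis on $G$ enters: $G$ lies in the class of groups treated in \cite{Na}, i.e.\ it satisfies the chain condition on centralizers up to finite index, in the form that for any subset $X\subseteq G$ there is a finite $X_0\subseteq X$ with $[C_G(X_0):C_G(X)]<\infty$. (Some such hypothesis is unavoidable: in an arbitrary group an infinite intersection of finite-index centralizers can have infinite index, e.g.\ in $\bigoplus_\omega \mathbb Z/2\mathbb Z$, so symmetry fails outright without a tameness assumption.) Applying this to $X=H_0$ inside $G$ yields $h_1,\dots,h_r\in H_0$ with $[C_G(h_1,\dots,h_r):C_G(H_0)]<\infty$, hence also $[C_K(h_1,\dots,h_r):C_K(H_0)]<\infty$ using $[A\cap K:B\cap K]\le[A:B]$ for $B\le A$; and since each $C_K(h_i)$ has finite index in $K$, so does $C_K(h_1,\dots,h_r)=\bigcap_{i\le r}C_K(h_i)$. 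Multiplying the two indices gives $[K:C_K(H_0)]<\infty$.

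Finally I would note that $C_K(H_0)\le \t C_G(H)$: any $k\in C_K(H_0)$ centralizes all of $H_0$, so $H_0\le C_H(k)$ and therefore $[H:C_H(k)]\le[H:H_0]<\infty$, i.e.\ $k\in\t C_G(H)$. Thus $\t C_G(H)$ contains the finite-index subgroup $C_K(H_0)$ of $K$, which is the assertion $K\ls\t C_G(H)$. Exchanging the roles of $H$ and $K$ gives the converse, hence the equivalence.

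The only genuine obstacle is the finite-index claim for $C_K(H_0)$; everything else is bookkeeping with indices of subgroups. In particular I would be careful to invoke the chain condition in the correct ``finitely generated up to finite index'' form, and to check that passing from $G$ to the subgroup $K$ (by intersecting all the centralizers with $K$) preserves the finiteness of the indices that matter, which is immediate from $[A\cap K:B\cap K]\le[A:B]$ for $B\le A$.
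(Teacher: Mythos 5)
Your reduction of symmetry to the claim that $C_K(H_0)$ has finite index in $K$, where $H_0=H\cap\widetilde{C}_G(K)$, is where the argument genuinely fails, for two reasons. First, the chain condition you invoke is not among the hypotheses: the Fact is quoted from \cite{Na} (Corollary 2.11), where it is proved for (ind-)definable subgroups in a sufficiently saturated group with no chain condition at all, by a compactness argument (saturation is used to turn the finite-index hypotheses into uniformly bounded ones) combined with a combinatorial counting argument; and NTP$_2$ does not provide the condition you use --- indeed the paper states explicitly that, unlike the simple case, it does not rely on chain conditions for uniformly definable subgroups. The strong form you invoke (every centralizer of an arbitrary subset equals, up to finite index, the centralizer of a finite subset) already fails for supersimple groups of SU-rank $1$. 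Second, and more fundamentally, your crux claim is simply false under the hypotheses of the Fact: let $G$ be an infinite extraspecial $p$-group as in Remark \ref{extraspecial} and take $H=K=G$. Every element has centralizer of index at most $p$, so $\widetilde{C}_G(H)=\widetilde{C}_G(K)=G$ and both sides of the equivalence hold trivially, with $H_0=G$; yet $C_K(H_0)=Z(G)$ is finite, hence of infinite index in $K$. So the finite-index subgroup of $K$ witnessing $K\lesssim\widetilde{C}_G(H)$ cannot in general be taken to be an honest centralizer of a finite-index subgroup of $H$; any proof of symmetry must work with the almost centralizer itself, which is exactly what makes the statement nontrivial.

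A smaller point: your parenthetical claim that symmetry ``fails outright'' without a tameness assumption, illustrated by $\bigoplus_\omega\mathbb{Z}/2\mathbb{Z}$, does not show anything --- that group is abelian, so all centralizers are the whole group and symmetry is trivial there. There is a genuine subtlety, but it concerns definability and saturation (the statement is a fact about definable subgroups, read in a saturated model as in \cite{Na}), not a chain condition on centralizers.
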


\begin{fact}\label{Fac_[H,K]fin}
Let $G$ be a group and let $H$ and $K$ be two definable subgroups of $G$ such that $H$ is normalized by $K$. Suppose that $H$ is contained in $\t C_G(K)$ and $K$ is contained in $\t C_G(H)$. So $[H,K]$ is finite.
\end{fact}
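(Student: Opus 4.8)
The plan is to reduce the statement to a uniform ("bounded") group-theoretic assertion and then run a counting argument in the spirit of B.\,H.\ Neumann, exactly as \cite{bhn} is used for Fact \ref{fac_neum}.

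First I would upgrade the hypotheses to uniform bounds, using that $H$ and $K$ are definable and $\G$ is saturated. For each $n$ the set $X_n:=\{h\in H:[K:C_K(h)]\le n\}$ is type-definable: $h\in X_n$ iff no $n+1$ elements of $K$ lie in pairwise distinct cosets of $C_K(h)$, i.e.\ $X_n=\bigcap_{k_0,\dots,k_n\in K}\{h:\bigvee_{i<j}[h,k_i^{-1}k_j]=1\}$. The $X_n$ increase, their union is $H$ by the assumption $H\subseteq\t C_G(K)$, and $H$ is definable; since each ``$h\in H\wedge h\notin X_n$'' is a formula (here $K$ definable is used) and the associated type is not realised, saturation forces $H=X_n$ for some $n$. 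Using $K\subseteq\t C_G(H)$ (equivalently, by Fact \ref{Fac_sym}), one gets $m$ with $[H:C_H(k)]\le m$ for all $k\in K$. So it suffices to prove the purely group-theoretic statement: if $K$ normalizes $H$, every $h\in H$ has at most $n$ conjugates under $K$, and every $k\in K$ centralizes a subgroup of $H$ of index at most $m$, then $[H,K]$ is finite.

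For the group-theoretic core, note $[H,K]\le H$ and $[H,K]\trianglelefteq\langle H,K\rangle$. I would first show the commutator set $\Delta:=\{[h,k]:h\in H,\ k\in K\}$ is finite, with $|\Delta|$ bounded in $m,n$: for fixed $k$ the map $h\mapsto[h,k]$ is constant on right cosets of $C_H(k)$, hence has $\le m$ values, and for fixed $h$ the map $k\mapsto[h,k]$ has $\le n$ values, and a Neumann-style coset-counting argument (finitely many $h$'s, respectively $k$'s, already exhaust the relevant centralizer cosets) bounds $|\Delta|$. Next, $\Delta$ is invariant under conjugation by $K$, since $[h,k']^{k}=[h^{k},k'^{k}]\in\Delta$; as $\Delta$ is finite, the kernel of $K\to\operatorname{Sym}(\Delta)$ is a finite-index subgroup of $K$ equal to $C_K([H,K])$, so $[K:C_K([H,K])]$ is finite. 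Combining this with the fact that every element of $[H,K]\le H$ has at most $n$ conjugates under $K$, and with a further analysis of how $H$ acts on $[H,K]$, one concludes via Fact \ref{fac_neum} that $[H,K]$ is finite; organising this last part is exactly what is done in \cite{Na} (the quoted Proposition~3.27), building on the basic properties of the almost-centralizer developed there, and I would invoke it rather than redo it in full.

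The main obstacle is precisely the completion of that last step, and the reason it is genuine is that neither uniform bound alone suffices and both must be exploited together. The infinite extraspecial $p$-group of Remark \ref{extraspecial} with $H=K$ is a case where the bounds are uniform and $[H,K]=Z(H)$ is indeed finite, showing that the statement is really a two-sided form of Neumann's theorem; on the other hand, taking $H=K$ to be a direct sum of finitely many-generated infinite extraspecial blocks shows that the \emph{non}-uniform finite-index hypotheses can hold while $[H,K]$ is infinite, so the saturation argument of the first step is not cosmetic. Concretely, in a self-contained write-up I would isolate the two statements ``$[K:C_K([H,K])]<\infty$'' and its $H$-sided analogue (so that $[H,K]$ becomes almost central of bounded index in $\langle H,K\rangle$), then use Fact \ref{fac_neum} to make $[[H,K],[H,K]]$ finite and finish with a bounded-exponent argument on the abelianization of $[H,K]$; making each of these steps precise is where the work lies.
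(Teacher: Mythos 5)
First, a point of comparison with the paper: the paper does not prove this statement at all. It is imported verbatim from \cite{Na} (the quoted Proposition 3.27), and it is only ever applied inside a group that has been assumed $|H|^+$-saturated (see the beginning of the proof of Theorem \ref{thm_main}(3)). So the question is whether your sketch would stand on its own, and it does not. What is correct and genuinely worth keeping is your first step: using definability of $H$ and $K$ together with saturation, the hypotheses $H\leq \widetilde{C}_G(K)$ and $K\leq \widetilde{C}_G(H)$ do upgrade, by compactness, to uniform bounds $[K:C_K(h)]\leq n$ for all $h\in H$ and $[H:C_H(k)]\leq m$ for all $k\in K$; and your example of a restricted direct sum of extraspecial groups correctly shows that without such uniformity the conclusion fails, so the saturated/definable reading of the Fact is not cosmetic.

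The gap is in the group-theoretic core, and it occurs earlier than you locate it. The finiteness of the commutator set $\Delta=\{[h,k]:h\in H,\,k\in K\}$ does not follow from the row/column bounds alone: an array can take at most two values in every row and in every column and still have infinite range, so the ``Neumann-style coset-counting argument'' you invoke must actually exploit the commutator identities and the coset structure simultaneously --- it is the substance of the theorem, and you name it rather than give it. More seriously, the passage from ``$C_K([H,K])$ has finite index in $K$'' (plus the bound on $K$-classes) to ``$[H,K]$ is finite'' is left as ``further analysis'' and then explicitly delegated to \cite{Na}, that is, to the very proposition being proved; the closing plan (make $[[H,K],[H,K]]$ finite via Fact \ref{fac_neum}, then a bounded-exponent argument on the abelianization of $[H,K]$) never addresses its key point, namely why that finitely generated abelianization should be torsion of bounded exponent. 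As a blind proof this is therefore an outline with the decisive steps missing --- which you candidly acknowledge --- and since the paper's own treatment is simply the citation of \cite{Na}, your attempt supplements but does not replace that citation.
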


We will also need a theorem which is the definable version of a result proven by Schlichting in \cite{Sch} and which can be found in \cite{Wag_book} as Theorem 4.2.4.  It deals with families of uniformly commensurable subgroups, a notion we now introduce:

\begin{defn}
A family $\mathcal{F}$ of subgroups is \emph{uniformly commensurable} if there exists a natural number $d$ such that for each pair of groups $H$ and $K$ from $\mathcal{F}$ the index of their intersection is smaller than $d$ in both $H$ and $K$.
\end{defn}

\begin{fact}\label{fac_schl}
Let $G$ be a group and $\mathcal H$ be a family of definable uniformly commensurable subgroups. Then there exists a definable subgroup $N$ of $G$ which is commensurable which all elements of $\mathcal H$ and which is invariant under any automorphisms of $G$ which stabilizes $\mathcal H$ setwise.
\end{fact}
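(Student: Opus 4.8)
\proof
The plan is to reconstruct the classical argument of Schlichting, which itself rests on ideas of Bergman and Lenstra. The guiding principle is that $N$ should be produced by a recipe referring only to $G$ and to the family $\mathcal H$, never to an arbitrary choice: such a recipe is then automatically preserved by any $\sigma \in \Aut(G)$ with $\sigma(\mathcal H) = \mathcal H$, so the invariance clause is free once the recipe is fixed, and the real content is the existence of a \emph{canonical} subgroup of $G$ commensurable with every member of a uniformly commensurable family.

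First I would pass from $\mathcal H$ to the larger family
$$\mathcal H' = \{ K \leq G : [H : H \cap K] \leq d \text{ and } [K : H \cap K] \leq d \text{ for some } H \in \mathcal H \},$$
where $d$ is the given uniform commensurability bound. One checks: $(i)$ $\mathcal H'$ is again uniformly commensurable, with bound $d^3$ (estimate $[K : K\cap L]$ by passing through an intermediate member of $\mathcal H$ --- a triangle inequality for commensurability indices); $(ii)$ $\mathcal H'$ depends only on $\mathcal H$, hence is stabilised setwise by every automorphism of $G$ stabilising $\mathcal H$; $(iii)$ if $\mathcal H$ is uniformly definable then the construction stays within definable subgroups, bounded-index conditions being first order. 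The combinatorial heart is then to extract one canonical commensurable subgroup: stratify the finite intersections $H_1 \cap \dots \cap H_k$ of members of $\mathcal H'$ according to how far they remain from being commensurated by $\mathcal H'$, observe that a single member of $\mathcal H'$ already lies in the first non-empty stratum so that the governing index parameter is bounded in terms of $d$, and then use Schlichting's counting argument to read a single subgroup $N$ off the extremal stratum, commensurable with every member of $\mathcal H'$ and hence of $\mathcal H$.

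Finally, $N$ is definable because the construction only ever involves boundedly many members of the uniformly definable family $\mathcal H$ together with boundedly many coset indices, so the assertion that $N$ equals this particular subgroup is captured by a single first-order formula, and a compactness argument replaces the a priori infinite input by finite definable data; invariance $\sigma(N) = N$ for $\sigma$ stabilising $\mathcal H$ then holds because $N$ was built canonically from $\mathcal H$. I expect the counting argument to be the only genuine obstacle: passing from a uniformly commensurable family to \emph{one canonical} commensurable subgroup, rather than merely to some commensurable subgroup, is exactly where the ingenuity of Schlichting and of Bergman--Lenstra lies, and is the reason this statement is not elementary.
\qed
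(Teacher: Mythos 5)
There is a genuine gap, and it sits exactly where you predicted. Note first that the paper does not prove this statement at all: it is quoted as the definable version of Schlichting's theorem \cite{Sch}, in the form given as Theorem 4.2.4 of \cite{Wag_book}, so there is no in-paper argument to compare against; your proposal must therefore stand on its own as a proof, and it does not. The surrounding architecture is right: building $N$ by a recipe depending only on $G$ and $\mathcal H$ does make the invariance under $\sigma\in\Aut(G)$ with $\sigma(\mathcal H)=\mathcal H$ automatic, and your enlargement to $\mathcal H'$ with the $d^3$ triangle estimate is correct. But the entire mathematical content of the theorem --- proving that finite intersections of members remain uniformly commensurable with a bound depending only on $d$ (the Bergman--Lenstra step), and then extracting from an extremal stratum one subgroup commensurable with every member --- is invoked by name (``Schlichting's counting argument'', ``the extremal stratum'') rather than carried out. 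A reader cannot check what the strata are, why the governing parameter is bounded, or why the extremal object is a group commensurable with the family; as you concede, this is the only genuinely hard point, and leaving it as a black box means the statement has not been proved.

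The definability clause is also asserted rather than derived, and your sketch of it is shakier than it looks. Passing to $\mathcal H'$ (all subgroups $d$-commensurable with some member) is harmless for canonicity but leaves the uniformly definable family, so ``the construction stays within definable subgroups'' is not automatic; and ``a compactness argument replaces the infinite input by finite definable data'' is not a proof. In the standard argument definability is obtained structurally: one shows that the invariant group $N$ can be taken to be a finite extension of a finite intersection of members of $\mathcal H$ (equivalently, it is generated by such an intersection together with finitely many elements), and since the members are definable and the relevant indices are finite, $N$ is definable. If you intend to prove the Fact rather than cite it as the paper does, you need to (i) prove the uniform bound on indices of finite intersections, (ii) give the extremal/counting construction of $N$ explicitly and verify commensurability and canonicity, and (iii) extract the finite-extension-of-finite-intersection description that yields definability.
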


\proof[Proof of Theorem \ref{thm_main}(3)]
Note that if $H$ is finite the result holds trivially. So we may assume that $H$ is infinite and suppose as well that $G$ is already  $|H|^+$-saturated.

We will prove by induction on the nilpotency class $n$ of $H$ that there exists a definable nilpotent subgroup $N$ of $G$ of class at most $2n$ and a sequence of subgroups:
$$\{1\} = N_0 \leq N_1 \leq N_2 \leq \dots \leq N_{2n} = N$$
such that  $H\ls N$ and for all $0\leq i < 2n$, we have that
\begin{itemize}
\item $N_i$ is definable and normal in $N$;
\item $[N_{i+1}, N]\leq N_i$.
\end{itemize}

If $H$ was supposed to be normal in $G$, we asked each $N_i$ to be normal in $G$ as well.

Let $n$ be equal to $1$. Then $H$ is abelian, and by Theorem \ref{thm_main}$(1)$ there exists a definable almost abelian subgroup $A$ of $G$ which contains $H$. As $[A,A]$ is finite by Fact \ref{fac_neum}, letting $N$ be equal to $C_A([A,A])$ and $N_1 = Z([A,A])$ gives the desired groups.

Assume now $n$ is strictly greater than $1$. We first show the following:

\begin{claim}
There is some definable subgroups $A$ and $K$ of $G$ such that:
\begin{itemize}
\item $A$ is a normal subgroup of $K$;

\item $Z(H)\ls A$ and $H \leq K$;

\item $K\leq \t C_G(A)$ and $ A \leq \t C_G(K)$;

\item $[A,K]$ is finite and contained in $\t C_G(K)$.

\end{itemize}

\end{claim}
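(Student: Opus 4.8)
The plan is to take the groups produced by Lemma~\ref{Cor_nil} and repair their two defects: the almost abelian group there need not be normal in the larger one, and the bound on conjugacy classes only delivers one of the two almost-centralizer inclusions we want. So I would begin by applying Lemma~\ref{Cor_nil} (using that $G$ is $|H|^+$-saturated) to obtain definable subgroups $A_0 \le K_0$ of $G$ and a number $m\in\omega$ with $Z(H)\le A_0$, $H\le K_0$, and $|k^{A_0}|\le m$ for every $k\in K_0$ (and, in case $H\trianglelefteq G$, with $A_0,K_0\trianglelefteq G$). Put $K:=K_0$; then $H\le K$, and since $[A_0:C_{A_0}(k)]=|k^{A_0}|\le m<\infty$ for every $k\in K$, we already have $K\le \widetilde{C}_G(A_0)$.

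To obtain a normal companion of $A_0$ inside $K$ I would apply Fact~\ref{fac_schl} to the family $\{A_0^k:k\in K\}$. This family is uniformly commensurable: from $C_{A_0}(g)\le A_0\cap A_0^g$ and $|g^{A_0}|\le m$ one gets $[A_0:A_0\cap A_0^g]\le m$ for all $g\in K$, and conjugating by $k$ (resp.\ $k'$) gives $[A_0^k:A_0^k\cap A_0^{k'}]\le m$ and $[A_0^{k'}:A_0^k\cap A_0^{k'}]\le m$ for all $k,k'\in K$. Fact~\ref{fac_schl} then yields a definable subgroup $N\le G$ commensurable with every $A_0^k$ and invariant under every automorphism of $G$ fixing this family setwise; since conjugation by an element of $K$ permutes the family, $K$ normalizes $N$, so $A_1:=N\cap K$ is definable and normal in $K$ (and normal in $G$ if $A_0,K_0$ are, the family then being the singleton $\{A_0\}$). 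As $A_0\le K$ we have $N\cap A_0\le A_1$, so $A_1$ is commensurable with $A_0$; commensurable subgroups have equal almost-centralizers, hence $K\le\widetilde{C}_G(A_0)=\widetilde{C}_G(A_1)$, and commensurability also yields $Z(H)\ls A_1$.

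For the remaining inclusion I would use symmetry. From $K\le\widetilde{C}_G(A_1)$ and Fact~\ref{Fac_sym} we get $A_1\ls\widetilde{C}_G(K)$, so $A:=A_1\cap\widetilde{C}_G(K)$ has finite index in $A_1$; since $\widetilde{C}_G(K)$ is normalized by $K$ (one checks $C_K(g^k)=C_K(g)^k$ for $k\in K$, whence $g\in\widetilde{C}_G(K)\Leftrightarrow g^k\in\widetilde{C}_G(K)$), $A$ remains normal in $K$, remains commensurable with $A_0$ — so $Z(H)\ls A$ and $K\le\widetilde{C}_G(A)$ persist — and $A\le\widetilde{C}_G(K)$ by construction. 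Now $A\trianglelefteq K$, $A\le\widetilde{C}_G(K)$ and $K\le\widetilde{C}_G(A)$, so Fact~\ref{Fac_[H,K]fin} (applied with its $H$ taken to be $A$) shows $[A,K]$ is finite, and $[A,K]\le A\le\widetilde{C}_G(K)$ gives the final clause. When $H$ is normal in $G$, each of $N$, $A_1$, $\widetilde{C}_G(K)$ (as $K\trianglelefteq G$) and hence $A$, as well as $K$, is normal in $G$.

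The one genuinely delicate point is the definability of $A=A_1\cap\widetilde{C}_G(K)$: a priori $\widetilde{C}_G(K)$ is only a subgroup of $G$. Here I would appeal to the structure theory of almost-centralizers in groups satisfying the chain condition on centralizers up to finite index — a condition enjoyed by every group definable in an NTP$_2$ theory — as developed in \cite{Na}, where $\widetilde{C}_G(K)$ is shown to be definable whenever $K$ is. This is precisely the extra ingredient, beyond the ideas used in the abelian and solvable cases, that the nilpotent case requires; the rest of the argument is routine bookkeeping with indices and commensurability.
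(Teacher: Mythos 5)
Your construction is the same as the paper's up to the last step: Lemma~\ref{Cor_nil}, then Schlichting (Fact~\ref{fac_schl}) applied to $\{A_0^k : k\in K\}$ to get a commensurable $A_1\trianglelefteq K$, then symmetry (Fact~\ref{Fac_sym}) to get $A_1\lesssim \widetilde{C}_G(K)$, then $A:=A_1\cap\widetilde{C}_G(K)$ and Fact~\ref{Fac_[H,K]fin}. The gap is exactly where you locate the ``delicate point'': you discharge the definability of $A$ by asserting that every group definable in an NTP$_2$ theory satisfies the chain condition on centralizers up to finite index, so that $\widetilde{C}_G(K)$ is definable by the results of \cite{Na}. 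That assertion is not available: the definability of almost centralizers in \cite{Na} is proved \emph{under} the $\widetilde{\mathfrak M}_c$ hypothesis, and no such chain condition is known for NTP$_2$ groups (it holds for simple groups by Wagner and one has Baldwin--Saxl in NIP, but NTP$_2$ alone gives neither). Indeed the paper states explicitly that, unlike Milliet's argument in the simple case, it does \emph{not} rely on any chain condition for uniformly definable subgroups --- avoiding that reliance is the whole point of the NTP$_2$ setting. Since Fact~\ref{Fac_[H,K]fin} is stated for \emph{definable} subgroups, the definability of $A$ is load-bearing, so as written the proof is incomplete.

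The repair does not need $\widetilde{C}_G(K)$ itself to be definable, only its trace on $A_1$, and this is where the finite index you have already established does the work (this is the paper's actual argument). Write $A=\bigcup_{d\in\mathbb N} A_d$ with
\[
A_d=\{x\in A_1 : [K:C_K(x)]<d\},
\]
each $A_d$ definable since $K$ is, and the $A_d$ forming an increasing chain. Since $[A_1:A]$ is finite, $A_1=\bigcup_{i=1}^k a_iA$ for finitely many cosets, hence $A_1=\bigcup_{i=1}^k\bigcup_{d}a_iA_d$; by compactness and the saturation of $G$ this cover has a finite subcover, and by the chain property a single $d$ suffices, i.e.\ $A_1=\bigcup_{i=1}^k a_iA_d$. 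From this one reads off that $A$ coincides with $A_d$ (for a suitable $d$), so $A$ is definable, normal in $K$, commensurable with $A_0$ (hence $Z(H)\lesssim A$ and $K\le\widetilde{C}_G(A)$), and contained in $\widetilde{C}_G(K)$; only then may Fact~\ref{Fac_[H,K]fin} be invoked to conclude that $[A,K]$ is finite and lies in $A\le\widetilde{C}_G(K)$. With this substitution your argument matches the paper's proof; the remaining steps (uniform commensurability of the conjugate family, normalization of $\widetilde{C}_G(K)$ by $K$, preservation of the properties under passing to $A$) are correct as you wrote them.
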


\begin{proof}
First, by Lemma \ref{Cor_nil}  we can find definable subgroups $A_0$ and $K$ of $G$  such that
\begin{enumerate}
\item   the cardinality of the conjugacy class $k^{A_0}$ for all elements $k$ in $K$ is bounded by  some natural number $m$;
\item $A_0$ is almost abelian and contains $Z(H)$;
\item $K$ contains $H$ and $A_0$.
\end{enumerate}

The next step to prove the claim is to replace $A_0$ by a commensurable subgroup which is additionally normal in $K$.

By (1) we can deduce that $K$ is contained in $\t C_G(A_0)$ and therefore $$\mathcal F =\{ {A_0}^k : k \in K\}$$ is a uniformly definable and uniformly commensurable family of subgroups of $G$. By Schlichting (Fact \ref{fac_schl})  one can find a definable subgroup $A_1$ of $K$ which is commensurable with all groups in $\mathcal F$, in particular with $A_0$, and which is stabilized by all automorphisms which stabilize the family setwise, and thus is normal in $K$.

The third step is to find a subgroup which is commensurable with $A_0$, still normal in $K$ and additionally contained in $\t C_K(K)$.
As $A_1$ is commensurable with $A_0$, we have that $K \leq \t C_K(A_1)$. By symmetry of the almost centralizer, we obtain that $A_1$ is almost contained in $\t C_K(K)$.  Let $A=A_1\cap K$; this is still a normal subgroup of $K$ and has finite index in $A_1$. Now, we only need to prove that it is definable.

Since it has finite index, the definable subgroup $A_1$ is a finite union of distinct cosets of $A$, say $A_1 = \bigcup _{i=1}^k a_i A$ for some $a_i\in A_1$. Furthermore, we have that $A$ is the union of the definable sets
\[
A_d:=\phi_d (x) = \{ x \in A_1 : [ K: C_K (x) ] < d\}.
\]
But then we have that
\[A_1=\bigcup _{i=1}^k \bigcup_{d\in \mathbb N} a_i A_d\]
so by compactness and saturation of $G$ this is equals to a finite subunion. Additionally, as  $\{A_d\}_{d\in \omega}$ was a chain of subsets of $A$ each contained in the next we have that
\[A_1=\bigcup _{i=1}^k  a_i A_d\]
for some fixed $d$. Hence $A$ is equal to $A_d$ and whence it  is a normal definable subgroup of $K$. Moreover, the group $A$ is commensurable with $A_0$, so it almost contains $Z(H)$  and $K$ is still in $\t C_G(A)$. Additionally, $A$ is contained in $\t C_G(K)$ and normal in $K$. Hence, by Fact \ref{Fac_[H,K]fin}, we have that the group $[A, K]$ is finite and as $A$ is normal in $K$, it is contained in $A$ and in $\t C_G(K)$. \end{proof}

%
%
%
%

\bigskip
Let $A$ and $K$ be as in the claim, so the index $[Z(H): A \cap Z(H)]$ is finite. Take a set of representative of coclasses $\{h_0, \dots, h_n\}$ of $A \cap Z(H)$ in $Z(H)$ and set $K'$ to be $C_K(h_0, \dots, h_n)$ and $A'$ to be $A \cap K'$. Note that $Z(H) \cap A$ equals $Z(H) \cap A'$. So, we still have the simililar properties for $A'$ and $K'$ as for $A$ and $K$, namely:
\begin{itemize}

\item $ H \leq K'$ and $Z (H) \ls A'$, in particular  $Z(H) \cap A \leq A'$;
\item $[A', K']$ is finite and contained in $\t C_G(K')$.

\end{itemize}

Let $X$ be equal to the finite subgroup $[A',K']$ of $K'$. Define $N^\ast$ to be $ C_{K'}(X)$, which is,  as $X$ is contained in $\t C_G(K')$, a definable subgroup of $K'$ of finite index. Let $N_1$ be equal to the definable normal subgroup $Z(N^\ast)$ which contains $X$ and $\{h_0, \dots, h_n\} \cap N^\ast$, and let $N_2$ be equal to the definable normal subgroup $Z_2(N^\ast)$  of $N^\ast$ which contains $A' \cap N^\ast $, still contains $\{h_0, \dots, h_n\}\cap N^\ast$ and therefore contains $Z(H) \cap N^\ast$. The groups $N_1$ and $N_2$ are the first two members of the nilpotent sequence we need to define, and they allow us to use the induction hypothesis.

\bigskip

Since $N_2$ contains $Z(H) \cap N^\ast$, we have that the group $(H\cap N^\ast) N_2/N_2$ has nilpotency class strictly smaller than $H$ and is contained in the group $N^\ast /N_2$ which is a interpretable group in an NTP$_2$ theory and thus NTP$_2$. We may apply the induction hypothesis to this group and find a definable sequence of subgroups of $N^\ast/N_2$
$$N_2/N_2 \leq N_3/N_2 \leq \dots \leq N_{2n}/N_2$$
such that
\begin{itemize}
\item each element of the sequence is normal in $N_{2n}/N_2 $,
\item  $H\cap N^\ast/N_2$ is almost contained in $N_{2n}/N_2$, and
\item  for all $i>1$ $N_{i+1}/N_i$ is in the center of $N_{2n}/N_i$.
\end{itemize}
As $N_{2n}$ is a subgroup of  $N^\ast$ we have that  $N_1 \leq Z(N_{2n})$ and $[N_2, N_{2n}] \leq N_1$. Moreover, $H \cap N^\ast$ has finite index in $H$ and hence $H$ is as well almost contained in $N_{2n}$ almost contains $H$. Whence,
$$\{1\} = N_0 \leq N_1 \leq \dots \leq N_{2n}$$
is an ascending central series of $N_{2n}$ with the desired properties.

\bigskip

Now, suppose that $H$ is normal in $G$. We consider the groups:
$$L := K^G \cap C_G( h_0^G, \dots , h_n^G) \ \ \ \mbox{ and } \ \ \ B = A^G\cap L$$
These are now normal subgroups of $G$ and we have as well that:
\begin{itemize}
\item $ H \leq L$ and $Z (H) \ls B$, in particular  $Z(H) \cap A \leq B$;
\item $[B, L]$ is finite and contained in $\t C_G(L)$.
\end{itemize}
Doing the same construction to find $N_1$ and $N_2$ replacing $K'$ and $A'$ by $L$ and $B$, we have additionally that $N_1$ and $N_2$ are normal in $G$. To find the rest of the sequence, we can finish as above using the fact that by induction hypothesis all groups will be normal in $G$.
\qed

\begin{cor}
Let $G$ be a group with an NTP$_2$ theory and let $H$ be a nilpotent normal subgroup of $G$ of class $n$. Then there exists a definable nilpotent normal subgroup $N$ of $G$ of class at most $3n$ which contains $H$.
\end{cor}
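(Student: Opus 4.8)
The plan is to obtain this as a direct consequence of Theorem \ref{thm_main}(3), via the same ``product'' trick already sketched for simple theories in the Preliminaries (where one notes that the product of the given normal nilpotent subgroup with its definable envelope up to finite index is again definable, normal and nilpotent, at the cost of tripling the class).

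First I would reduce to the case where $G$ is already $|H|^+$-saturated: replace $G$ by an $|H|^+$-saturated elementary extension, which has the same (NTP$_2$) theory and in which $H$ is still a normal nilpotent subgroup of class $n$. Then Theorem \ref{thm_main}(3), together with its ``moreover'' clause for the normal case, supplies a \emph{definable} subgroup $M$ of $G$ that is normal in $G$, nilpotent of class at most $2n$, and satisfies $H \ls M$.

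Next I would set $N := HM$ and verify the required properties. Since $[H : H\cap M]$ is finite, picking coset representatives $h_1,\dots,h_k$ of $H\cap M$ in $H$ gives $HM = h_1 M \cup \dots \cup h_k M$, a finite union of cosets of the definable group $M$; hence $N$ is definable. As $M$ is normalized by $G$ and $H \leq G$, the set $HM$ is a subgroup of $G$, and it is normal in $G$ because $H$ and $M$ both are; and of course $H \leq N$.

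Finally I would bound the nilpotency class of $N = HM$. Since $G$ is $|H|^+$-saturated we have $M \leq G$, so $H$ and $M$ are both normal subgroups of $G$, hence of $N = HM$; Fitting's theorem --- the product of two normal nilpotent subgroups of classes $c$ and $d$ is nilpotent of class at most $c+d$ --- then shows that $N$ is nilpotent of class at most $n + 2n = 3n$. The only steps requiring any care are the passage from ``$H \ls M$'' to ``$HM$ is definable'' and the normality bookkeeping needed to invoke Fitting's theorem; beyond this the corollary is essentially a repackaging of Theorem \ref{thm_main}(3), so I do not expect a genuine obstacle.
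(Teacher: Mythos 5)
Your proposal is correct and is essentially the paper's own argument: apply Theorem \ref{thm_main}(3) in its normal version to get a definable normal nilpotent $M$ of class at most $2n$ with $H \ls M$, and then take the product $HM$, which is definable as a finite union of cosets of $M$, normal, and nilpotent of class at most $3n$ by Fitting's theorem. You merely spell out the definability of $HM$, the saturation bookkeeping, and the appeal to Fitting's theorem, all of which the paper leaves implicit.
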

\begin{proof}
By Theorem \ref{thm_main}(3), there is a definable nilpotent subgroup $N_0$ of class at most $2n$ which almost contains $H$ and which is normal in $G$. Thus, the group $NH$ is a definable normal nilpotent subgroup of $G$ of nilpotency class at most $3n$.
\end{proof}

\bibliographystyle{abbrv}

\bibliography{groups}

\end{document}